\newtheorem{lemma}{Lemma}[section]
\newtheorem{theorem}{Theorem}[section]
\newtheorem{corollary}{Corollary}[section]
\newcommand{\Z}{\mathbb{Z}}
\newcommand{\R}{\mathbb{R}}
\newcommand{\C}{\mathbb{C}}
\newcommand{\N}{\mathbb{N}}
\newcommand{\F}{\mathcal{F}}
\newcommand{\J}{\mathcal{J}}
\newcommand{\K}{\mathcal{K}}
\newcommand{\M}{\mathcal{M}}
\newcommand{\BigOh}{\mathcal{O}}
\newcommand{\eps}{\varepsilon}
\renewcommand{\phi}{\varphi}
\renewcommand{\theta}{\vartheta}
\DeclareMathOperator*{\Res}{\mathrm{Res}}
\DeclareMathOperator{\Arg}{\mathrm{arg}}
\numberwithin{equation}{section}
\begin{document}
\title[Poincar\'e functions and Julia sets]
{Complex asymptotics of Poincar\'e functions and properties of Julia sets}

\author[G. Derfel, P. J. Grabner and F. Vogl]
{GREGORY DERFEL\\
Department of Mathematics and Computer Science,\\
Ben Gurion University of the Negev,
Beer Sheva 84105, Israel\\
e-mail\textup{: \texttt{derfel@math.bgu.ac.il}}
\nextauthor{} PETER J. GRABNER\thanks{This author is supported by the
Austrian Science Foundation FWF, project S9605, part of the Austrian
National Research Network ``Analytic Combinatorics and Probabilistic
Number Theory''.}\\
Institut f\"ur Analysis und Computational Number Theory
(Math  A),\\
Technische Universit\"at Graz,
Steyrergasse 30,
8010 Graz, Austria\\
e-mail\textup{: \texttt{peter.grabner@tugraz.at}}\\
\and{} FRITZ VOGL\\
Institut f\"ur Analysis und Scientific Computing,
Technische Universit\"at Wien,\\
Wiedner Hauptstra\ss{}e 8--10,
1040 Wien, Austria\\
e-mail\textup{: \texttt{F.Vogl@gmx.at}}}

\maketitle

\begin{center}\emph{ Dedicated to Robert F. Tichy on the occasion of his
    \textup{50}\textsuperscript{th} birthday.}
\end{center}

\begin{abstract}
  The asymptotic behaviour of the solutions of Poincar\'e's functional equation
  $f(\lambda z)=p(f(z))$ ($\lambda>1$) for $p$ a real polynomial of degree
  $\geq2$ is studied in angular regions $W$ of the complex plain. It is known
  \cite{Derfel_Grabner_Vogl2007:asymptotics_poincare_functions,
    Derfel_Grabner_Vogl2008:zeta_function_laplacian} that $f(z)\sim\exp(z^\rho
  F(\log_\lambda z))$, if $f(z)\to\infty$ for $z\to\infty$ and $z\in W$, where
  $F$ denotes a periodic function of period $1$ and $\rho=\log_\lambda\deg(p)$.
  In the present paper we refine this result and derive a full asymptotic
  expansion. The constancy of the periodic function $F$ is characterised in
  terms of geometric properties of the Julia set of $p$.  For real Julia sets
  we give inequalities for multipliers of Pommerenke-Levin-Yoccoz type. The
  distribution of zeros of $f$ is related to the harmonic measure on the Julia
  set of $p$.
\end{abstract}

\section{Introduction}\label{sec:introduction}
\subsection{Historical remarks}\label{sec:historical-remarks}

In his seminal papers
\cite{Poincare1886:une_classe_etendue,Poincare1890:une_classe_nouvelle}
H.~Poincar\'e has studied the equation
\begin{equation}\label{Eq 1}
f(\lambda z)= R(f(z)),\quad z \in \C,
\end{equation}
where $R(z)$ is a rational function and $\lambda\in\C$.  He proved
that, if $R(0)=0$, $R'(0)=\lambda$, and $|\lambda|>1$, then there exists a
meromorphic or entire solution of (\ref{Eq 1}).  After Poincar\'e, (\ref{Eq 1})
is called {\em the Poincar\'e equation} and solutions of (\ref{Eq 1}) are
called {\em the Poincar\'e functions }. The next important step was made by
G.~Valiron
\cite{Valiron1923:lectures_on_general,Valiron1954:fonctions_analytiques}, who
investigated the case, where $R(z)=p(z)$ is a polynomial, i.e.
\begin{equation}\label{eq:poincare}
f(\lambda z)=p(f(z)),\quad z \in \C,
\end{equation}
and obtained conditions for the existence of an entire solution $f(z)$.
Furthermore, he derived the following asymptotic formula for
$M(r)=\max_{|z|\leq r}|f(z)|$:
\begin{equation}\label{Eq 3}
\log M(r)\sim r^{\rho}F\left(\frac{\log r}{\log |\lambda|}\right),
\quad r\rightarrow \infty.
\end{equation}
Here $F(z)$ is a $1$-periodic function bounded between two positive constants,
$\rho=\frac{\log d}{\log |\lambda|}$ and $d=\deg p(z)$. 

Different aspects of the Poincar\'e functions have been studied in the papers
\cite{Derfel_Grabner_Vogl2007:asymptotics_poincare_functions,
Derfel_Grabner_Vogl2008:zeta_function_laplacian,
Eremenko_Levin1989:periodic_points_polynomials,
Eremenko_Sodin1990:iterations_rational_functions,
Ishizaki_Yanagihara2005:borel_and_julia,
Romanenko_Sharkovsky2000:long_time_properties}. In particular in
\cite{Derfel_Grabner_Vogl2007:asymptotics_poincare_functions}, in addition to
(\ref{Eq 3}), asymptotics of entire solutions $f(z)$ on various rays
$re^{i \vartheta}$ of the complex plane have been found.

It turns out that this asymptotic behaviour heavily depends on the arithmetic
nature of $\lambda$. For instance, if $\Arg\lambda=2\pi\beta$, and $\beta$ is
irrational, then $f(z)$ is unbounded along any ray $\Arg z=\theta$
(cf.~\cite{Derfel_Grabner_Vogl2007:asymptotics_poincare_functions}).
\subsection{Assumptions}\label{sec:assumptions}
In the present paper we concentrate on the simplest, but maybe most important
case for applications, namely, when $\lambda$ is real and $p(z)$ is a real
polynomial (i.~e. all coefficients of $p(z)$ are real).

It is known from \cite{Valiron1954:fonctions_analytiques} and
\cite{Derfel_Grabner_Vogl2007:asymptotics_poincare_functions} that, if $f(z)$
is an entire solution of \eqref{eq:poincare}, then the only admissible values
for $f_0=f(0)$ are the fixed points of $p(z)$ (i.~e. $p(f_0)=f_0$). Moreover,
entire solutions exist, if and only if there exists an $n_0\in\N$ such that
\begin{equation*}
\lambda^{n_0}=p'(f_0).
\end{equation*}
It was proved in
\cite[Propositions~2.1--2.3]{Derfel_Grabner_Vogl2007:asymptotics_poincare_functions}
that the general case may be reduced to the simplest case
\begin{equation*}
f(0)=p(0)=0\text{ and }p'(0)=\lambda>1
\end{equation*}
by a change of variables. In the same vein, we can assume without loss of
generality that $f'(0)=1$ and the polynomial $p$ is monic (i.~e. the leading
coefficient is $1$)
\begin{equation}\label{eq:poly}
  p(z)=z^d+p_{d-1}z^{d-1}+\cdots+p_1z.
\end{equation}

\subsection{Poincar\'e and Schr\"oder equations}
\label{sec:poinc-schr-equat}
The functional equation \eqref{eq:poincare} with the additional (natural)
conditions $f(0)=0$ and $f'(0)=1$ is closely related to Schr\"oder's functional
equation (cf.~\cite{Schroeder1871:uber_iterierte_funktionen})
\begin{equation}\label{eq:schroeder}
g(p(z))=\lambda g(z),\quad g(0)=0\text{ and }g'(0)=1
\end{equation}
which was used by G.~Koenigs~\cite{Koenigs1884:recherches_sur_integrales,
  Koenigs1885:nouvelles_recherches_sur} to study the behaviour of $p$ under
iteration around the repelling fixed point $z=0$. By definition, $g$ is the
local inverse of $f$ around $z=0$. Both functions together provide a
linearisation of $p$ around its repelling fixed point $z=0$
\begin{equation*}
g(p(f(z)))=\lambda z\text{ and }g(p^{
(n)}(f(z)))=\lambda^n z,
\end{equation*}
where $p^{(n)}(z)$ denotes the $n$-th iterate of $p$ given by $p^{(0)}(z)=z$
and $p^{(n+1)}(z)=p(p^{(n)}(z))$.

We note here that \eqref{Eq 1} and \eqref{eq:poincare} are also called
Schr\"oder equation by some authors. For instance, the value distribution of
solutions of the Poincar\'e (alias Schr\"oder) equation \eqref{Eq 1} has been
investigated in \cite{Ishizaki_Yanagihara2005:borel_and_julia}.

\subsection{Branching processes and diffusion on fractals}
\label{sec:branch-proc-diff}
Iterative functional equations occur in the context of branching processes
(cf.~\cite{Harris1963:theory_branching_processes}). Here a probability
generating function
\begin{equation*}
q(z)=\sum_{n=0}^\infty p_nz^n
\end{equation*}
encodes the offspring distribution, where with $p_n\geq0$ is the probability
that an individual has $n$ offspring in the next generation (note that
$q(1)=1$). The growth rate $\lambda=q'(1)$ decides whether the population is
increasing ($\lambda>1$) or dying out $\lambda\leq1$. In the first case the
branching process is called \emph{super-critical}. The probability generating
function $q^{(n)}(z)$ ($n$-th iterate of $q$) encodes the distribution of the
size $X_n$ of the $n$-th generation under the offspring distribution $q$. In
the case of a super-critical branching process it is known that the random
variables $\lambda^{-n}X_n$ tend to a limiting random variable $X_\infty$. The
moment generating function of this random variable
\begin{equation*}
f(z)=\mathbb{E}e^{-zX_\infty}
\end{equation*}
satisfies the functional equation
(cf.~\cite{Harris1963:theory_branching_processes})
\begin{equation*}
f(\lambda z)=q(f(z)),
\end{equation*}
which is \eqref{eq:poincare}, if $q$ is a polynomial. Furthermore, this
equation can be transformed into \eqref{eq:poincare}, if $q$ is conjugate to a
polynomial by a M\"obius transformation, especially $q(z)=\frac1{p(1/z)}$,
where $p$ is a polynomial.

Branching processes have been used in \cite{Barlow1998:diffusions_on_fractals,
  Barlow_Perkins1988:brownian_motion_sierpinski,
  Lindstroem1990:brownian_motion_nested} to model time for the Brownian motion
on certain types of self-similar structures such as the Sierpi\'nski gasket. In
this context the zeros of the solution of \eqref{eq:poincare} are the
eigenvalues of the infinitesimal generator of the diffusion (``Laplacian''), if
the generating function of the offspring distribution is conjugate to a
polynomial (cf.~\cite{Derfel_Grabner_Vogl2008:zeta_function_laplacian,
  Grabner1997:functional_iterations_stopping,
  Malozemov_Teplyaev2003:self_similarity_operators,
  Teplyaev2004:spectral_zeta_function,
  Teplyaev2007:spectral_zeta_functions}).
In this case the zeros of $f$ have to be real, since they are eigenvalues of a
self-adjoint operator. This motivates the investigation of real Julia sets in
Section~\ref{sec:real-julia-set}.

\subsection{Contents}\label{sec:contents}
The paper is organised as follows.

In Section~\ref{sec:asympt-infin-fatou} we study the asymptotic behaviour of
$f(z)$ in those sectors $W$ of the complex plane, where
\begin{equation}\label{eq:infty}
f(z)\to\infty \text{ for } z\to\infty,\quad z\in W.
\end{equation}
It was proved in
\cite{Derfel_Grabner_Vogl2007:asymptotics_poincare_functions,
Derfel_Grabner_Vogl2008:zeta_function_laplacian} that \eqref{eq:infty} implies
\begin{equation*}
f(z)\sim\exp\left(z^\rho F\left(\frac{\log z}{\log\lambda}\right)\right)
\text{ for }z\to\infty,\quad z\in W,
\end{equation*}
where $F(z)$ is a periodic function of period $1$. In
Section~\ref{sec:asympt-infin-fatou} we will refine this result to a full
asymptotic expansion of $f(z)$, which takes the form
\begin{equation}\label{eq:f-asymp-1}
  f(z)=\exp\left(z^\rho 
    F\left(\log_\lambda z\right)\right)+
  \sum_{n=0}^\infty c_n\exp\left(-nz^\rho 
    F\left(\log_\lambda z\right)\right),
\end{equation}
where $F$ is a periodic function of period $1$ holomorphic in some strip
depending on $W$ and $\rho=\log_\lambda d$. The proof is based on an
application of the B\"ottcher function at $\infty$ of $p(z)$.

We note here that E.~Romanenko and A.~Sharkovsky
\cite{Romanenko_Sharkovsky2000:long_time_properties} have studied equation
\eqref{eq:poincare} on $\R$ (rather than $\C$) and obtained a full asymptotic
expansion of this type by Sharkovsky's method of ``first integrals'' or
``invariant curves''.

Further analysis of the periodic function $F$ occurring in \eqref{eq:f-asymp-1}
is presented in Section~\ref{sec:furth-analys-peri}, where the Fourier
coefficients of $F$ are related to the B\"ottcher function at $\infty$ of
$p(z)$ and the harmonic measure on the Julia set of $p$.

In Section~\ref{sec:asympt-finite-fatou} the asymptotic behaviour of $f(z)$ is
studied in sectors that are related to basins of attraction of finite
attracting fixed points.

In Section~\ref{sec:zeros-poinc-funct} we relate geometric properties of the
Julia set to the location of the zeros of $f$.

Section~\ref{sec:real-julia-set} is devoted to the special case of real Julia
sets $\J(p)$. Here we prove, in particular, the following inequalities of
Pommerenke-Levin-Yoccoz type for multipliers of fixed points $\xi$:
\begin{equation}\label{eq:pommerenke}
p(\xi)=\xi\Rightarrow
\begin{cases}
  |p'(\xi)|\geq d&\text{ for }\min\J(p)<\xi<\max\J(p)\\
  |p'(\xi)|\geq d^2&\text{ for }\xi=\min\J(p)\text{ or }\xi=\max\J(p).
\end{cases}
\end{equation}
Furthermore, equality can hold only, if $p$ is linearly conjugate to a
Chebyshev polynomial of the first kind.

In Section~\ref{sec:zeta-funct-poinc} we continue the study of Dirichlet
generating functions of zeros of Poincar\'e functions that we started in
\cite{Derfel_Grabner_Vogl2008:zeta_function_laplacian} in the context of
spectral zeta functions on certain fractals. We relate the poles and
residues of the zeta function of $f$ to the Mellin transform of the harmonic
measure $\mu$ on the Julia set of $p$. Furthermore, we show a connection
between the zero counting function of $f$ and the harmonic measure $\mu$ of
circles around the origin.

\section{Relation of complex asymptotics and the Fatou set}
\label{sec:relat-compl-asympt}
Throughout the rest of the paper we will use the following notations and
assumptions. Let $p$ be a real polynomial of degree $d$ as in \eqref{eq:poly}.
We always assume that $p(0)=0$ and $p'(0)=a_1=\lambda$ with $|\lambda|>1$. We
refer to
\cite{Beardon1991:iteration_rational_functions,Milnor2006:dynamics_complex} as
general references for complex dynamics.

We denote the Riemann sphere by $\C_\infty$ and consider $p$ as a map on
$\C_\infty$. We recall that the Fatou set $\F(p)$ is the set of all
$z\in\C_\infty$ which have an open neighbourhood $U$ such that the sequence
$(p^{(n)})_{n\in\N}$ is equicontinuous on $U$ in the chordal metric on
$\C_\infty$. By definition $\F(p)$ is open. We will especially need the
component of $\infty$ of $\F(p)$ given by
\begin{equation}\label{eq:Fatou-infty}
\F_\infty(p)=\left\{z\in\C\mid \lim_{n\to\infty}p^{(n)}(z)=\infty\right\},
\end{equation}
as well as the basins of attraction of a finite attracting fixed point $w_0$
($p(w_0)=w_0$, $|p'(w_0)|<1$)
\begin{equation}\label{eq:Fatou-w0}
\F_{w_0}(p)=\left\{z\in\C\mid \lim_{n\to\infty}p^{(n)}(z)=w_0\right\}.
\end{equation}
The complement of the Fatou set is the Julia set
$\J(p)=\C_\infty\setminus\F(p)$.

The filled Julia set is given by
\begin{equation}\label{eq:filled-Julia}
\K(p)=\left\{z\in\C\mid (p^{(n)}(z))_{n\in\N}\text{ is bounded}\right\}=
\C\setminus\F_\infty(p).
\end{equation}
Furthermore, it is known that (cf.~\cite{Falconer2003:fractal_geometry})
\begin{equation}\label{eq:boundary}
\partial\K(p)=\partial\F_\infty(p)=\J(p).
\end{equation}
In the case of polynomials this can be used as an equivalent definition of the
Julia set.

We will also use the notations
\begin{equation}\label{eq:W_alpha_beta}
W_{\alpha,\beta}=\left\{z\in\C\setminus\{0\}\mid \alpha<\arg z<\beta\right\}
\end{equation}
and
\begin{equation*}
B(z,r)=\left\{w\in\C\mid |z-w|<r\right\}.
\end{equation*}

\subsection{Asymptotics in the infinite Fatou component}
\label{sec:asympt-infin-fatou}

In \cite{Derfel_Grabner_Vogl2007:asymptotics_poincare_functions,
Derfel_Grabner_Vogl2008:zeta_function_laplacian} the asymptotics
of the solution of the Poincar\'e equation \eqref{eq:poincare} was
given. We want to present a different approach here, which gives a full
asymptotic expansion. 
\begin{theorem}\label{thm:poincare-asymp}
Let $f$ be the entire solution of the Poincar\'e equation \eqref{eq:poincare}
for a real polynomial $p$ with $\lambda=p'(0)>1$. Assume further that the Fatou
component of $\infty$, $\F_\infty(p)$ contains an angular region
$W_{\alpha,\beta}$. 
\begin{description}
\item[A] Then the following asymptotic expansion for $f$  is valid for all
  $z\in W_{\alpha,\beta}$ large enough
  \begin{equation}\label{eq:f-asymp}
    f(z)=\exp\left(z^\rho 
      F\left(\log_\lambda z\right)\right)+
    \sum_{n=0}^\infty c_n\exp\left(-nz^\rho 
      F\left(\log_\lambda z\right)\right),
  \end{equation}
where $F$ is a
 periodic function  of period $1$ holomorphic
  in the strip
\begin{equation*}
  \left\{z\in\C\mid \frac{\alpha}{\log\lambda}<\Im z<\frac{\beta}{\log\lambda}
    \right\}
\end{equation*}
and $\rho=\log_\lambda d$. Furthermore,
  \begin{equation}\label{eq:Re>0}
    \forall z\in W_{\alpha,\beta}:\Re z^\rho F(\log_\lambda z)>0
  \end{equation}
  holds.
\item[B] Let $g$ denote the B\"ottcher function associated with $p$,
  \emph{i.~e.}
\begin{equation}\label{eq:boettcher}
(g(z))^d=g(p(z))
\end{equation}
in some neighbourhood of $\infty$.  Its inverse function is given by the
Laurent series around $\infty$
  \begin{equation}\label{eq:Boettcher-inverse-cn}
    g^{(-1)}\left(w\right)=w+\sum_{n=0}^\infty\frac{c_n}{w^n}.
  \end{equation}
Then we have
\begin{equation*}
f(z)=g^{(-1)}\left(\exp\left(z^\rho 
      F\left(\log_\lambda z\right)\right)\right)
\end{equation*}
and $c_n$ can be determined from the coefficients of $p$.
\end{description}
\end{theorem}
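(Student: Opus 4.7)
The plan is to exploit the B\"ottcher coordinate $g$ of $p$ at $\infty$, which conjugates $p$ to $w\mapsto w^d$ and reduces \eqref{eq:poincare} to a trivial multiplicative functional equation. Recall that in a neighbourhood of $\infty$ one defines
\begin{equation*}
g(z)=\lim_{n\to\infty}\left(p^{(n)}(z)\right)^{d^{-n}},
\end{equation*}
with branches chosen so that $g(z)/z\to 1$; then $g$ is univalent on some neighbourhood of $\infty$ in $\F_\infty(p)$ and satisfies \eqref{eq:boettcher}, and its inverse admits the Laurent expansion \eqref{eq:Boettcher-inverse-cn}. The coefficients $c_n$ are obtained from those of $p$ by substituting $g^{(-1)}(w)=w+\sum_k c_k w^{-k}$ into $g(p(z))=g(z)^d$ and comparing Laurent series, which will yield the last assertion of part B.

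Once $g\circ f$ is defined as a single-valued holomorphic function on the large-$|z|$ part of $W_{\alpha,\beta}$, the Poincar\'e and B\"ottcher equations combine to give
\begin{equation*}
g(f(\lambda z))=g(p(f(z)))=g(f(z))^d,
\end{equation*}
so $h(z):=\log g(f(z))$ satisfies the multiplicative functional equation $h(\lambda z)=d\,h(z)$. A standard argument (changing variables to $w=\log_\lambda z$, so that the equation becomes $H(w+1)=dH(w)$, and extracting the factor $d^w$) shows that the holomorphic solutions on a sector are precisely $h(z)=z^\rho F(\log_\lambda z)$ with $\rho=\log_\lambda d$ and $F$ a $1$-periodic holomorphic function. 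Pulling $W_{\alpha,\beta}$ back through $w=\log_\lambda z$ yields holomorphy of $F$ on the strip $\alpha/\log\lambda<\Im w<\beta/\log\lambda$. Since the image of $g$ lies in $\{|w|>1\}$ near $\infty$ we have $\Re h(z)>0$, which is \eqref{eq:Re>0}. Inverting then gives $f(z)=g^{(-1)}(e^{h(z)})$, and substituting \eqref{eq:Boettcher-inverse-cn} produces the series \eqref{eq:f-asymp}, with absolute convergence guaranteed by $|e^{h(z)}|>1$.

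The main obstacle is to define $g\circ f$ coherently on all of $W_{\alpha,\beta}\cap\{|z|>R\}$, since $g$ is a priori only defined near $\infty$ and extends univalently to $\F_\infty(p)$ only when $\J(p)$ is connected. To handle this I would use the hypothesis together with the earlier cited asymptotics (which provide $|f(z)|\to\infty$ in $W_{\alpha,\beta}$) to conclude that $f(z)$ enters the B\"ottcher domain for $|z|$ large, and then extend $g\circ f$ along the sector via the self-similar identity
\begin{equation*}
g(f(z))=\bigl(g(f(\lambda^n z))\bigr)^{d^{-n}},
\end{equation*}
obtained from \eqref{eq:boettcher}, with $n$ chosen large enough that $f(\lambda^n z)=p^{(n)}(f(z))$ lies in the B\"ottcher domain. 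Consistency between different choices of $n$ follows from the B\"ottcher equation, and the branches of the $d^n$-th roots and of $\log$ in $h=\log g\circ f$ can be fixed globally thanks to the simple connectedness of a sufficiently far-out subsector of $W_{\alpha,\beta}$.
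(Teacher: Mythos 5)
Your proposal is correct and follows essentially the same route as the paper's proof: conjugate the Poincar\'e equation by the B\"ottcher coordinate at infinity to get $h(\lambda z)=h(z)^d$ for $h=g\circ f$, solve this with a $1$-periodic holomorphic function, use $|h|>1$ for \eqref{eq:Re>0}, and recover $f$ and the expansion \eqref{eq:f-asymp} from the Laurent series of $g^{(-1)}$. The extra care you take in defining $g\circ f$ coherently on the whole sector is a refinement of the same argument (the paper simply works where $|f(z)|>R$), not a different method.
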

\begin{proof}
  We recall that $p$ has a super-attracting fixed point of order $d=\deg p$ at
  infinity. We consider the B\"ottcher function $g$ associated with this fixed
  point (cf.~\cite{Beardon1991:iteration_rational_functions,
Blanchard1984:complex_analytic_dynamics,
    Boettcher1905:beitraege_zur_theorie,
    Kuczma_Choczewski_Ger1990:iterative_functional_equations}), which satisfies
  the functional equation \eqref{eq:boettcher}
in some neighbourhood of infinity. The B\"ottcher function has a
Laurent expansion around infinity given by
\begin{equation}\label{eq:Boettcher-Laurent}
g(z)=z+\sum_{n=0}^\infty\frac{b_n}{z^n},
\end{equation}
which converges for $|z|>R$ for some $R>0$. The coefficients $(b_n)_{n\in\N_0}$
can be determined uniquely from the coefficients of the polynomial $p$.

Using the B\"ottcher function we can rewrite the Poincar\'e equation assuming
that $|f(z)|>R$
\begin{equation}\label{eq:Poincare-Boettcher}
(g(f(z)))^d=g(p(f(z)))=g(f(\lambda z)).
\end{equation}
From this we derive that $h(z)=g(f(z))$ satisfies the much simpler functional
equation
\begin{equation*}
(h(z))^d=h(\lambda z),
\end{equation*}
which only holds for those values $z$ for which $|f(z)|>R$. This equation has
solutions
\begin{equation}\label{eq:h(z)}
h(z)=\exp\left(z^\rho F\left(\log_\lambda z\right)\right)
\end{equation}
with $\rho=\log_\lambda d$ and $F$ a periodic function of period
$1$ holomorphic in some strip parallel to the real axis. Since $|h(z)|>1$ for
all $z$ with $|f(z)|>R$ by the properties of the function $g$, we have
\eqref{eq:Re>0}.

By \eqref{eq:Boettcher-Laurent} $g$ is invertible in some neighbourhood of
$\infty$ and we can write \eqref{eq:Boettcher-inverse-cn}
where the coefficients $c_n$ depend only on the coefficients of the
polynomial $p$. This function satisfies the functional equation
\begin{equation}\label{eq:Boettcher-inverse}
g^{(-1)}(w^d)=p(g^{(-1)}(w))
\end{equation}
for $w$ in some neighbourhood of $\infty$. Inserting \eqref{eq:h(z)} into
\eqref{eq:Boettcher-inverse-cn} yields \eqref{eq:f-asymp}
giving an exact and asymptotic expression for $f(z)$.
\end{proof}
\begin{remark}\label{rem8}
  E.~Romanenko and A.~Sharkovsky have studied equation
  \eqref{eq:poincare} on $\R$ (rather than on $\C$)
  in \cite{Romanenko_Sharkovsky2000:long_time_properties}.
  Applying Sharkovsky's method of ``first integrals'' (``invariant graphs'')
  they obtained a full asymptotic formula of type \eqref{eq:f-asymp} for all
  solutions $f(x)$, such that $f(x)\to\infty$ for $x\to\infty$.
\end{remark}

\subsection{B\"ottcher functions, Green functions, and constancy 
of the periodic function $F$}\label{sec:bottch-funct-green}
We will make frequent use of the integral representation of the
B\"ottcher function
\begin{equation}\label{eq:Boettcher-int}
g(z)=\exp\left(\int_{\J(p)}\log(z-x)\,d\mu(x)\right),
\end{equation}
where $\mu$ denotes the harmonic measure on the Julia set $\J(p)$
(cf.~\cite{Bessis_Geronimo_Moussa1984:mellin_transforms_associated,
Brolin1965:invariant_sets_under,
Ransford1995:potential_theory_complex_plane}). This shows
that $g$ is holomorphic on any simply connected subset of
$\F_\infty(p)$. The measure $\mu$ can be given as the weak
limit of the measures
\begin{equation}\label{eq:mu_n}
\mu_n=\frac1{d^n}\sum_{p^{(n)}(x)=\xi}\delta_x,
\end{equation}
where $\xi$ can be chosen arbitrarily (not exceptional) and $\delta_x$ denotes
the unit point mass at $x$ (cf.~\cite{Brolin1965:invariant_sets_under,
  Ransford1995:potential_theory_complex_plane}).

The function $g(z)$ can be continued to any simply connected subset $U$ of
$\C_\infty\setminus\K(p)$ (this follows for instance from the integral
representation \eqref{eq:Boettcher-int}). Furthermore, it follows
from \cite[Lemma~9.5.5]{Beardon1991:iteration_rational_functions} and
\eqref{eq:boettcher} that
\begin{equation*}
  g(U)\subset\{z\in\C_\infty\mid |z|>1\}.
\end{equation*}
The function $\log|g(z)|$ is the Green function for the logarithmic potential
on $\F_\infty(p)$
(cf.~\cite[Section~9]{Beardon1991:iteration_rational_functions}). Combining
classical potential theory with polynomial iteration theory we get
\begin{equation}\label{eq:Julia-condition}
\lim_{\substack{z\to z_0\\ z\in \F_\infty(p)}}|g(z)|=1\Leftrightarrow
z_0\in\J(p),
\end{equation}
where the implication $\Leftarrow$ is
\cite[Lemma~9.5.5]{Beardon1991:iteration_rational_functions}. The opposite
implication is a general property of the Green function
(cf.~\cite[Chapter~III]{Garnett_Marshall2005:harmonic_measure}, and
\cite[Section~6.5]{Ransford1995:potential_theory_complex_plane}) combined with
the fact that $\partial\F_\infty(p)=\J(p)$ for polynomial $p$.

\begin{theorem}\label{thm:constant}
The periodic function $F$ occurring in the asymptotic expression
\eqref{eq:f-asymp} for $f$ is constant, if and only if the polynomial $p$ is
either linearly conjugate to $z^d$ or to the Chebyshev polynomial of the first
kind $T_d(z)$.
\end{theorem}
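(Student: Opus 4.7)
For the ``if'' direction, I would verify the statement by direct computation on canonical representatives of the two conjugacy classes. For $p$ linearly conjugate to $z^d$, taking $p(z)=(z+1)^d-1$ gives a monic polynomial with fixed point $0$ and multiplier $\lambda=d$, so $\rho=1$; the entire Poincar\'e solution is $f(z)=e^z-1$, the B\"ottcher coordinate is $g(z)=z+1$, and one immediately checks $g(f(z))=e^z$, so $F\equiv 1$. For $p$ linearly conjugate to $T_d$, taking the monic scaling $p(z)=2T_d(z/2+1)-2$ gives $\lambda=d^2$ and $\rho=1/2$; using $T_d(\cosh y)=\cosh(dy)$ one checks that $f(z)=2(\cosh\sqrt{z}-1)$ is the entire Poincar\'e solution, and with the shifted Joukowski-type B\"ottcher $g(z)=(z/2+1)+\sqrt{(z/2+1)^2-1}$ a short computation yields $g(f(z))=e^{\sqrt{z}}$, so again $F\equiv 1$.

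For the ``only if'' direction, assume $F\equiv c$. By Theorem~\ref{thm:poincare-asymp}(B) this reads as the exact identity $g(f(z))=\exp(cz^\rho)$ on the subdomain of $W_{\alpha,\beta}$ where $f(z)\in\F_\infty(p)$. The strategy is to exploit the entireness of $f$ to extend $g^{(-1)}$ to a rational function on $\C_\infty$, and then to invoke the classification of such semi-conjugacies. First, the loci $\{z\in W_{\alpha,\beta}:\Re(cz^\rho)=0\}$ are real-analytic rays from the origin, and as $z$ approaches such a ray, $|g(f(z))|=|\exp(cz^\rho)|\to 1$, so by \eqref{eq:Julia-condition} the entire function $f$ sends these rays into $\J(p)$, producing a real-analytic parametrization of $\J(p)$; in particular $\J(p)$ must be connected, ruling out the Cantor-set case. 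Second, rewriting the identity as $f(z)=g^{(-1)}(\exp(cz^\rho))$ and noting that the left side is entire while $\exp(cz^\rho)$ sweeps through points on both sides of the unit circle $\{|w|=1\}$ as $z$ varies, one concludes that $g^{(-1)}$ must extend analytically across $\{|w|=1\}$; the functional equation $g^{(-1)}(w^d)=p(g^{(-1)}(w))$ then propagates the extension across every preimage circle $\{|w|=r^{1/d^n}\}$, yielding a meromorphic extension of $g^{(-1)}$ to all of $\C_\infty$.

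The final step is the classification. A rational $g^{(-1)}$ with $g^{(-1)}(\infty)=\infty$ that semi-conjugates $w\mapsto w^d$ to $p$ must, by a degree-and-branching analysis, be either a M\"obius transformation (forcing $p$ linearly conjugate to $z^d$ and $\J(p)$ a round circle) or, up to M\"obius change of coordinate, the Joukowski map $\tfrac{1}{2}(w+w^{-1})$ (forcing $p$ linearly conjugate to $T_d$ and $\J(p)$ a line segment). The real-polynomial hypothesis fixes the intervening conjugation to be real, yielding the stated characterization. The main technical obstacle I anticipate is the extension argument for $g^{(-1)}$: one must handle the branch of $z^\rho$ carefully on the simply connected sector $W_{\alpha,\beta}$ (especially when $\rho\notin\mathbb{Q}$), and the iterated extension across preimages of the unit circle must be shown to be globally consistent, so that the extended $g^{(-1)}$ is in fact single-valued on $\C_\infty$ rather than branching along the dense set $\bigcup_n\{|w|=r^{1/d^n}\}$.
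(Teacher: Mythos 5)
Your ``if'' direction is fine for the two representatives you compute, but as stated it does not yet cover the full claim: you still need to note that constancy of $F$ is invariant under real affine conjugation (the Poincar\'e and B\"ottcher functions transform compatibly), and your Chebyshev representative only treats the case where the fixed point $0$ is an endpoint of the Julia segment ($\lambda=d^2$, $\rho=\tfrac12$); when $0$ is an interior fixed point one has $\lambda=d$, $\rho=1$ and a cosine-type solution, which must be checked as well. This is minor --- the paper treats this direction as essentially immediate (see the Remark following Theorem~\ref{thm:constant} and Sections~\ref{sec:negative-julia-set}--\ref{sec:julia-set-has}).

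The ``only if'' direction, however, has genuine gaps. First, from $|g(f(z))|=|\exp(cz^\rho)|\to1$ along the rays $\Re(cz^\rho)=0$ you obtain only the inclusion of the image of these rays \emph{into} $\J(p)$; the assertion that this yields a ``real-analytic parametrization of $\J(p)$'', and hence that $\J(p)$ is connected and the Cantor case is excluded, requires the reverse inclusion, namely that every $w_0\in\J(p)\setminus\{0\}$ is of the form $f(z_0)$ with $cz_0^\rho\in i\R$. That is exactly the step the paper supplies: from the exact identity $g(f(z))=\exp(cz^\rho)$ it deduces that $g$ continues analytically to a neighbourhood of each such $w_0$, so that \eqref{eq:Julia-condition} upgrades to the pointwise equivalence $|g(w_0)|=1\Leftrightarrow w_0\in\J(p)$, and only then is $\J(p)$ identified with the image of the analytic curve. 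Second, your meromorphic extension of $g^{(-1)}$ to all of $\C_\infty$ is not carried out: the relation $g^{(-1)}(w^d)=p(g^{(-1)}(w))$ transports information from $w$ to $w^d$, i.e.\ outward, whereas you must move inward across $\{|w|=1\}$, which forces a choice of $d$-th roots and a verification of branch-independence --- precisely the single-valuedness problem you flag but do not resolve. Third, the closing classification (``a rational semiconjugacy of $w\mapsto w^d$ to $p$ fixing $\infty$ is M\"obius or Joukowski'') is asserted rather than proved; a bare degree count is consistent with any degree of $g^{(-1)}$, so one must genuinely use univalence of $g^{(-1)}$ on $\{|w|>1\}$ and control its behaviour inside the disc. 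The paper avoids the second and third issues altogether: having shown that $\J(p)$ is an analytic arc, it invokes Hamilton's theorem that an analytic-arc Julia set is a line segment or a circle, and then Beardon's characterisations (segment if and only if Chebyshev, circle if and only if monomial). If you repair the first gap, you could likewise conclude that $\J(p)$ is an analytic arc and finish by citation, making your extension and classification steps unnecessary; as written, the argument does not yet establish the theorem.
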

\begin{proof}
The periodic function $F$ is constant, if and only if the function
$h(z)=g(f(z))$ introduced above satisfies
\begin{equation}\label{eq:h-exact}
h(z)=\exp\left(Cz^\rho\right)
\end{equation}
for some constant $C\neq0$. This implies that for any
$w_0\in\J(p)\setminus\{0\}$ the function $g$ has an analytic continuation to
some open neighbourhood of $w_0$. Thus 
\eqref{eq:Julia-condition} can be replaced by
\begin{equation*}
|g(w_0)|=1 \Leftrightarrow w_0\in\J(p)
\end{equation*}
in our case. By \eqref{eq:h-exact} this is equivalent to $w_0=f(z_0)$ for
$Cz_0^\rho\in i\R$.  Since $Cz^\rho\in i\R$ describes an analytic curve (with a
possible cusp at $z=0$), the Julia set of $p$ is the image of this curve under
the entire function $f$, thus itself an analytic arc.

By \cite[Theorem~1]{Hamilton1995:length_julia_curves} $\J(p)$ can only be an
analytic arc, if the Julia set of $p$ is either a line segment or a circle.
The Julia set is a line segment, if and only if $p$ is linearly conjugate to
the Chebyshev polynomial $T_d$
(cf.~\cite[Theorem~1.4.1]{Beardon1991:iteration_rational_functions}); the Julia
set is a circle, if and only if $p$ is linearly conjugate to $z^d$
(cf.~\cite[Theorem~1.3.1]{Beardon1991:iteration_rational_functions}).
\end{proof}
\begin{remark} Suppose that the periodic function $F$ is constant. If $p$ is
  linearly conjugate to a monomial, then the B\"ottcher function $g$ and
  therefore its inverse are linear functions. In this case $\rho=1$. (We recall
  that we generally assume that $f'(0)=1$.)  If $p$ is linearly conjugate to a
  Chebyshev polynomial, $g^{(-1)}$ is linearly conjugate to the Joukowski
  function $z+\frac1z$.  In this case $\rho=1$, if $0$ is an inner point of the
  line segment $\J(p)$, and $\rho=\frac12$, if $0$ is an end point of the line
  segment $\J(p)$ (cf.~Sections~\ref{sec:negative-julia-set}
  and~\ref{sec:julia-set-has}). Furthermore, the asymptotic series
  \eqref{eq:f-asymp} is finite, if the periodic function $F$ is constant.
\end{remark}
\subsection{Further analysis of the periodic function}
\label{sec:furth-analys-peri}
In this section we relate the periodic function $F$ occurring in
\eqref{eq:f-asymp} to the local behaviour of the B\"ottcher function at the
fixed point $f(0)=0$. 

This will allow to express the Fourier coefficients of $F$ in terms of residues
of the Mellin transform (cf.~\cite{Doetsch1971:handbuch_der_laplace,
  Oberhettinger1974:tables_mellin_transforms}) of the harmonic measure $\mu$
given by \eqref{eq:mu_n}. This Mellin transform was introduced and studied in
\cite{Bessis_Geronimo_Moussa1984:mellin_transforms_associated}.  A similar
relation was also used in \cite{Grabner1997:functional_iterations_stopping} to
derive an asymptotic expression for $f$ in a special case.

We will use the relation
\begin{equation}\label{eq:G(w)}
G(w)=\log g(w)=\int_{\J(p)}\log(w-x)\,d\mu(x)
\end{equation}
between the (complex) ``Green function'' $G$ and the B\"ottcher function $g$.
Assume that the Fatou component $\F_\infty(p)$ contains an angular region
centred at the fixed point $0$. Furthermore, assume that $\lim_{w\to 0}g(w)=1$.
Then \eqref{eq:h(z)} holds in this angular region. This fact can be used to
analyse the local behaviour of $\log g(w)$ around $w=0$:
\begin{equation}\label{eq:logg}
\log g(w)=\left(f^{(-1)}(w)\right)^\rho
F\left(\log_\lambda f^{(-1)}(w)\right)=
w^\rho F\left(\log_\lambda w\right)+\BigOh(w^{\rho+1}).
\end{equation}
Thus the behaviour of the Green function $G$
at the point $0$ exhibits the same periodic function $F$ as the asymptotic
expansion of $\log f$ around $\infty$.

\begin{figure}[h]
  \centering
  \includegraphics[width=0.8\hsize]{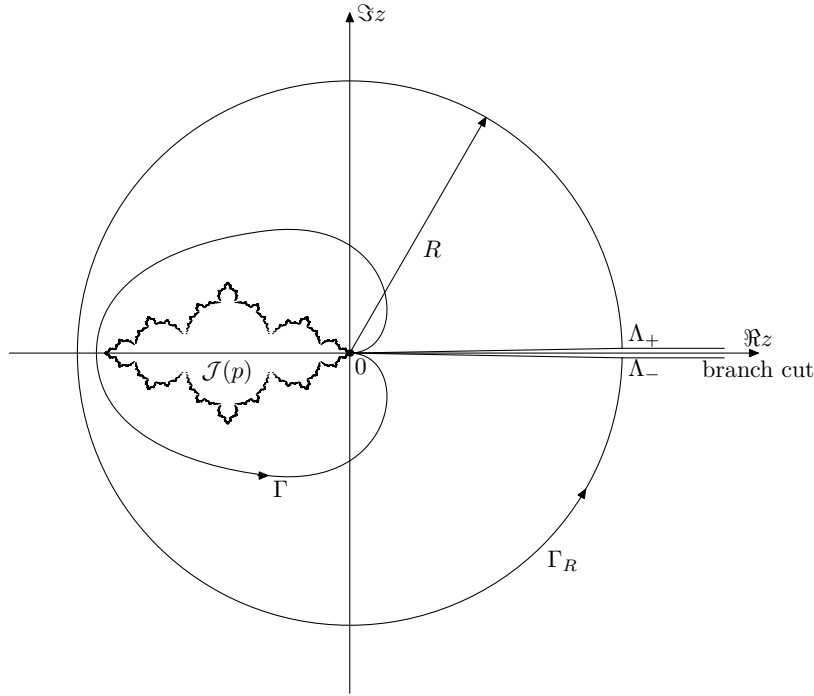}
  \caption{Paths of integration.}
  \label{fig:paths}
\end{figure}

We now relate the Green function $G(w)$ to the Mellin transform of $\mu$
\begin{equation}\label{eq:Mellin}
M_\mu(s)=\int_{\J(p)}(-x)^s\,d\mu(x),
\end{equation}
where the branch cut for the function $(-x)^s$ is chosen to connect $0$
with $\infty$ without any further intersection with $\J(p)$. Following the
computations in
\cite[Section~5]{Bessis_Geronimo_Moussa1984:mellin_transforms_associated} we
obtain
\begin{equation*}
M_\mu(s)=\frac1{2\pi i}\oint_\Gamma(-z)^s\,dG(z)=
\frac1{2\pi i}\oint_{\Gamma_R}(-z)^s\,dG(z).
\end{equation*}
For $\Re s<0$ we have for the circle of radius $R$
\begin{equation*}
\left|\frac1{2\pi i}\int_{|z|=R}(-z)^s\,dG(z)\right|\ll R^{\Re s},
\end{equation*}
which allows to let $R\to\infty$ in this case. This gives
\begin{multline*}
M_\mu(s)=\frac1{2\pi i}\left(\int_{\Lambda_+}(-z)^s\,dG(z)-
\int_{\Lambda_-}(-z)^s\,dG(z)\right)\\
=\frac{e^{-i\pi s}-e^{i\pi s}}{2\pi i}\int_0^\infty x^sG'(x)\,dx=
s\frac{\sin\pi s}\pi\int_0^\infty x^{s-1}G(x)\,dx,
\end{multline*}
which relates the Mellin transform of the measure $\mu$ to the Mellin transform
of the function $G(z)$
\begin{equation}\label{eq:Mellin-G}
\M G(s)=\int_0^\infty x^{s-1}G(x)\,dx=\frac\pi{s\sin\pi s}M_\mu(s)\text{ for }
-\rho<\Re s<0.
\end{equation}

The function $M_\mu(s)$ (and therefore $\M G(s)$ by \eqref{eq:Mellin-G}) has an
analytic continuation by the following observation
\begin{equation}\label{eq:continuation}
M_\mu(s)=\frac1d\sum_{k=1}^d\int_{\J(p)}(-p_k^{(-1)}(x))^s\,d\mu(x),
\end{equation}
where $p_k^{(-1)}$ ($k=1,\ldots,d$) denote the $d$ branches of the inverse
function of $p$; we choose the numbering so that $p_1^{(-1)}(0)=0$. The
summands for $k=2,\ldots,d$ are clearly entire functions in $s$, since the
integrand is bounded away from $0$ and $\infty$. For the summand with $k=1$ we
observe that
\begin{equation}\label{eq:approx}
p_1^{(-1)}(x)=\frac1\lambda x+\BigOh(x^2)\text{ for }x\to0.
\end{equation}
Inserting this into \eqref{eq:continuation} gives
\begin{multline*}
M_\mu(s)=\frac1d\lambda^{-s}\int_{\J(p)}(-x)^s\,d\mu(x)+
\frac1d\lambda^{-s}\int_{\J(p)}(-x)^s\BigOh(x)\,d\mu(x)\\+
\frac1d\sum_{k=2}^d\int_{\J(p)}(-p_k^{(-1)}(x))^s\,d\mu(x),
\end{multline*}
where the second term on the right-hand-side originates from inserting the
holomorphic function $\BigOh(x^2)$ from \eqref{eq:approx} into the
integrand, which gives a function holomorphic in a larger domain.
Thus we obtain
\begin{equation}\label{eq:Mellin-continuation}
M_\mu(s)=\frac1{d\lambda^s-1}H(s)
\end{equation}
for some function $H(s)$ holomorphic for $\Re s>-\rho-1$ 
($\rho=\log_\lambda d$). The numerator
$d\lambda^s-1$ has zeros at $s=-\rho+\frac{2k\pi i}{\log\lambda}$ ($k\in\Z$),
which give possible poles for the function $M_\mu(s)$.
\begin{remark}
  Using the full Taylor expansion of $p_1^{(-1)}(x)$ instead of the
  $\BigOh$-term in \eqref{eq:approx} would yield the existence of a meromorphic
  continuation of $M_\mu(s)$ to the whole complex plane.
\end{remark}

Taking \eqref{eq:Mellin-G} and \eqref{eq:Mellin-continuation} together gives
the analytic continuation of $\M G(s)$ to $-\rho-1<\Re s<0$. Then the Mellin
inversion formula (cf.~\cite{Doetsch1971:handbuch_der_laplace}) gives (for
$-\rho<c<0$)
\begin{multline}\label{eq:Mellin-inv-G}
G(x)=\frac1{2\pi i}\int\limits_{c-i\infty}^{c+i\infty}\M G(s)x^{-s}\,ds=
\frac1{2\pi i}\int\limits_{c-i\infty}^{c+i\infty}\frac\pi{s\sin\pi s}
\frac1{d\lambda^s-1}H(s)x^{-s}\,ds\\
=\frac1{2\pi i}\int\limits_{-\rho-\frac12-i\infty}^{-\rho-\frac12+i\infty}
\frac\pi{s\sin\pi s}
\frac1{d\lambda^s-1}H(s)x^{-s}\,ds+
\sum_{k\in\Z}\Res_{s=-\rho+\frac{2k\pi i}{\log\lambda}}
\M G(s)x^{-s}.
\end{multline}
The integral in the second line is $\BigOh(x^{\rho+\frac12})$, the sum of
residues can be evaluated further to give the Fourier expansion of the periodic
function $F$
\begin{equation}\label{eq:Fourier-F}
\sum_{k\in\Z}\Res_{s=-\rho+\frac{2k\pi i}{\log\lambda}}
\M G(s)x^{-s}=x^\rho\sum_{k\in\Z}f_k e^{2k\pi i\log_\lambda x}=
x^\rho F(\log_\lambda x).
\end{equation}
The Fourier coefficients $f_k$ are given by
\begin{multline}\label{eq:fk}
f_k=\Res_{s=-\rho-\frac{2k\pi i}{\log\lambda}}\M G(s)=
\frac\pi{\left(-\rho-\frac{2k\pi i}{\log\lambda}\right)
\sin\pi\left(-\rho-\frac{2k\pi i}{\log\lambda}\right)}
\Res_{s=-\rho-\frac{2k\pi i}{\log\lambda}}M_\mu(s)\\
=\frac\pi{\left(-\log d-2k\pi i\right)
\sin\pi\left(-\rho-\frac{2k\pi i}{\log\lambda}\right)}
H\left(-\rho-\frac{2k\pi i}{\log\lambda}\right).
\end{multline}

\subsection{Asymptotics in a finite Fatou component -- 
analysis of asymptotic values}
\label{sec:asympt-finite-fatou}
It is clear from the functional equation \eqref{eq:poincare} for $f$ that any
asymptotic value of $f$ has to be an attracting fixed point of the polynomial
$p$ (including $\infty$). Thus the analysis in
Section~\ref{sec:asympt-infin-fatou} can be interpreted as the behaviour of $f$
when approaching the asymptotic value $\infty$. In the present section we
extend this analysis to all asymptotic values.

First we study the case of a finite attracting, but not super-attracting fixed
point. Let $w_0$ be an attracting fixed point of $p$ and denote
$\eta=p'(w_0)\neq0$ ($|\eta|<1$).  Then there exists a solution $\Psi$ of the
Schr\"oder equation
\begin{equation}\label{eq:schroeder-Psi}
\eta\Psi(z)=\Psi(p(z)),\quad \Psi(w_0)=0,\text{ and }\Psi'(w_0)=1,
\end{equation}
which is holomorphic in $\F_{w_0}(p)$ (for instance, the sequence
$(\eta^{-n}(p^{(n)}(z)-w_0))_{n\in\N}$ converges to $\Psi$ on any compact
subset of $\F_{w_0}(p)$). Assume now that $\F_{w_0}(p)$ contains an angular
region $W_{\alpha,\beta}\cap B(0,r)$ for some $r>0$. Then by conformity of $f$
some angular region at the origin is mapped into $W_{\alpha,\beta}\cap B(0,r)$.
We consider the function
\begin{equation*}
j(z)=\Psi(f(z)),
\end{equation*}
which satisfies the functional equation
\begin{equation}\label{eq:Psif}
j(\lambda z)=\Psi(f(\lambda z))=\Psi(p(f(z)))=\eta\Psi(f(z))=\eta j(z).
\end{equation}
This equation has the solution
\begin{equation}\label{eq:jz}
j(z)=z^{\log_\lambda\eta}H(\log_\lambda z)
\end{equation}
with some periodic function of period $1$, holomorphic in some strip. This
periodic function can never be constant, since otherwise $j(z)$ would have an
analytic continuation to the slit complex plane. From this it would follow that
$f$ is bounded in the slit complex plane, a contradiction.

The function $\Psi$ has a holomorphic inverse around $0$
\begin{equation*}
\Psi^{(-1)}(z)=w_0+z+\sum_{n=2}^\infty \psi_nz^n
\end{equation*}
which allows us to write
\begin{equation}\label{eq:fw0}
f(z)=\Psi^{(-1)}\left(z^{\log_\lambda\eta}H(\log_\lambda z)\right)=
w_0+z^{\log_\lambda\eta}H(\log_\lambda z)+
\sum_{n=2}^\infty \psi_nz^{n\log_\lambda\eta}(H(\log_\lambda z))^n,
\end{equation}
which is valid in the angular region $W_{\alpha,\beta}$ for $z$ large enough.
This gives an exact and asymptotic expression for $f$ in an angular region.

In the case of a super-attracting fixed point $w_0$ we have $p'(w_0)=0$. Assume
that the first $k-1$ derivatives of $p$ vanish in $w_0$, but the $k$-th
derivative is non-zero. Then $p(z)=(z-w_0)^kP(z)$ with $P(w_0)=A\neq0$. We use
the solution $g$ of the corresponding B\"ottcher equation
\begin{equation}\label{eq:boettcher-w0}
g(p(z))=A(g(z))^k\quad g(w_0)=0,\quad g'(w_0)=1
\end{equation}
to linearise \eqref{eq:poincare}
\begin{equation*}
g(f(\lambda z))=g(p(f(z)))=A(g(f(z)))^k.
\end{equation*}
Thus the function $h(z)=g(f(z))$ satisfies
\begin{equation*}
h(\lambda z)=A(h(z))^k.
\end{equation*}
This equation has solutions
\begin{equation*}
h(z)=A^{-\frac1{k-1}}\exp\left(z^{\log_\lambda k}
L\left(\log_\lambda z\right)\right)
\end{equation*}
for a periodic function $L$ of period $1$ and a suitable choice of the
$(k-1)$-th root. Furthermore, by the fact that $\lim_{z\to\infty}h(z)=0$ we
have
\begin{equation*}
\Re\left(z^{\log_\lambda k} L\left(\log_\lambda z\right)\right)<0\text{ for }
f(z)\in\F_{w_0}(p).
\end{equation*}
using the local inverse of $g$ around $0$ we get
\begin{multline}\label{eq:f-asymp-w0}
f(z)=g^{(-1)}\left(A^{-\frac1{k-1}}\exp\left(z^{\log_\lambda k}
L\left(\log_\lambda z\right)\right)\right)\\
=w_0+A^{-\frac1{k-1}}\exp\left(z^{\log_\lambda k}
L\left(\log_\lambda z\right)\right)(1+o(1)).
\end{multline}

Summing up, we have proved
\begin{theorem}\label{thm10}
  Let $w_0$ be an attracting fixed point of $p$ such that the Fatou component
  $\F_{w_0}(p)$ contains an angular region $W_{\alpha,\beta}\cap B(0,r)$ for
  some $r>0$. Then the asymptotic behaviour of $f$ for $z\to\infty$ and $z\in
  W_{\alpha,\beta}$ is given by \eqref{eq:fw0}, if $\eta=p'(w_0)\neq0$, and by
  \eqref{eq:f-asymp-w0}, if $p(z)-w_0$ has a zero of order $k$ in $w_0$.
\end{theorem}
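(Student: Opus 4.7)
The plan is to prove the two cases separately, following exactly the strategy already laid out in the discussion preceding the statement: in each case produce a local conjugacy of $p$ to a simpler map near $w_0$, compose with $f$ to reduce the Poincar\'e equation to an elementary dilation equation in $z$, solve it in closed form using a $1$-periodic factor, and then invert the conjugacy.

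\textbf{Case $\eta=p'(w_0)\neq 0$.} By Koenigs' linearisation theorem, the limit
\begin{equation*}
\Psi(z)=\lim_{n\to\infty}\eta^{-n}\bigl(p^{(n)}(z)-w_0\bigr)
\end{equation*}
exists locally and extends to a holomorphic function on all of $\F_{w_0}(p)$ satisfying \eqref{eq:schroeder-Psi}. Because $f$ is conformal at $0$ and $f(0)=0$ lies in a region of $\F_{w_0}(p)$ by hypothesis, on a suitable angular region at infinity the composition $j=\Psi\circ f$ is well defined, and the Poincar\'e equation gives $j(\lambda z)=\eta j(z)$ as in \eqref{eq:Psif}. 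Substituting the ansatz $j(z)=z^{\log_\lambda\eta}H(\log_\lambda z)$ separates variables and forces $H$ to be $1$-periodic and holomorphic in a strip whose width is determined by $\beta-\alpha$. The periodic function $H$ cannot be constant: if it were, $j$ would extend holomorphically to the slit plane, forcing $f$ to be bounded there, contradicting that $f$ is a non-constant entire function of positive order. Inverting $\Psi$ around $0$ via its Taylor series yields \eqref{eq:fw0}.

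\textbf{Case $w_0$ super-attracting of local degree $k$.} Writing $p(z)-w_0=(z-w_0)^k P(z)$ with $A:=P(w_0)\neq 0$, the B\"ottcher construction produces a germ $g$ with $g(w_0)=0$, $g'(w_0)=1$ and $g(p(z))=A\,(g(z))^k$ as in \eqref{eq:boettcher-w0}. Setting $h=g\circ f$ then gives $h(\lambda z)=A\,(h(z))^k$. Iterating this equation or, equivalently, setting $h(z)=A^{-1/(k-1)}\exp(u(z))$ converts it into $u(\lambda z)=k\,u(z)$, whose general solution holomorphic in a strip is $u(z)=z^{\log_\lambda k}L(\log_\lambda z)$ for a $1$-periodic $L$. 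The condition $f(z)\in\F_{w_0}(p)$ corresponds via $g$ to $|h(z)|$ small, which translates to $\Re u(z)<0$ in the relevant sector. Inverting $g$ around $0$ gives \eqref{eq:f-asymp-w0}.

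In both cases the asymptotic series converges (for $z$ large in $W_{\alpha,\beta}$) because the inverse conjugacies $\Psi^{(-1)}$ and $g^{(-1)}$ are holomorphic on a disc around $0$, and the modulus of the inner expression tends to $0$ as $z\to\infty$ in the sector.

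The only genuinely non-routine point is verifying that the periodic factor $H$ (respectively $L$) is not identically constant and that the conjugating maps extend throughout the required component of the Fatou set; once those facts are in hand, the two reductions to $j(\lambda z)=\eta j(z)$ and $h(\lambda z)=A(h(z))^k$ are direct consequences of \eqref{eq:poincare} and the respective equations \eqref{eq:schroeder-Psi} and \eqref{eq:boettcher-w0}, and the explicit forms \eqref{eq:fw0} and \eqref{eq:f-asymp-w0} follow by composing with the local inverses. I do not expect any real obstacle beyond the bookkeeping already carried out above.
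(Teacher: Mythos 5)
Your proposal is correct and follows essentially the same route as the paper: composing $f$ with the Koenigs/Schr\"oder linearisation $\Psi$ (when $\eta\neq0$) or the B\"ottcher conjugacy $g$ (in the super-attracting case), reducing \eqref{eq:poincare} to the dilation equations $j(\lambda z)=\eta j(z)$ and $h(\lambda z)=A(h(z))^k$, solving these with a $1$-periodic factor, and inverting the local conjugacy to obtain \eqref{eq:fw0} and \eqref{eq:f-asymp-w0}. The only slight imprecision is the phrase that ``$f(0)=0$ lies in a region of $\F_{w_0}(p)$'' (the origin is on the Julia set); what is actually used, as in the paper, is that conformality of $f$ at $0$ maps a small angular region at the origin into $W_{\alpha,\beta}\cap B(0,r)\subset\F_{w_0}(p)$, which your argument otherwise supplies.
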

\begin{remark}
  The periodic function $H$ in \eqref{eq:fw0} cannot be constant, because
  otherwise $f(z)$ would be bounded. The periodic function $L$ in
  \eqref{eq:f-asymp-w0} can only be constant, if $p$ is linearly conjugate to
  $z^k$, by the same arguments as in the proof of Theorem~\ref{thm:constant}
  (the case of Chebyshev polynomials does not occur, because they only have
  repelling finite fixed points).
\end{remark}

As a consequence of Ahlfors' theorem on asymptotic values
(cf.~\cite{Goluzin1969:geometric_theory_functions}) and Valiron's theorem on
the growth of $f$
(cf.~\cite{Valiron1923:lectures_on_general,Valiron1954:fonctions_analytiques})
we get an upper bound for the number of attracting fixed points of a
polynomial.
\begin{theorem}
Let $p$ be a real polynomial of degree $d>1$ and let
\begin{equation*}
\gamma=\max\left\{|p'(z)|\mid p(z)=z\right\}.
\end{equation*}
Then the number of (finite) attracting fixed points of $p$ is
bounded by $2\log_\gamma d$, i.e.
\begin{equation}\label{eq:ahlfors}
\#\left\{z\in\C\mid p(z)=z\wedge |p'(z)|<1\right\}\leq 2\log_\gamma d.
\end{equation}
\end{theorem}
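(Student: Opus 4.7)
The plan is to apply the Denjoy--Carleman--Ahlfors theorem (every entire function of finite order $\rho$ has at most $2\rho$ distinct finite asymptotic values) to a Poincar\'e function associated with a repelling fixed point of $p$ whose multiplier realises the maximal modulus $\gamma$. This is sharper than using the Poincar\'e function at $0$, which would only yield the weaker bound $2\log_\lambda d$. Concretely, I would fix a fixed point $\xi^*$ of $p$ with $|p'(\xi^*)|=\gamma$; since $p'(0)=\lambda>1$ is itself a multiplier, $\gamma\geq\lambda>1$ and $\xi^*$ is repelling. The local Koenigs linearisation at $\xi^*$, extended by iteration of the functional equation exactly as in Section~1.1, produces an entire Poincar\'e function $f^*:\C\to\C$ satisfying $f^*(\mu z)=p(f^*(z))$ with $\mu=p'(\xi^*)$ and $f^*(0)=\xi^*$. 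Valiron's growth formula~\eqref{Eq 3}, which depends on $\mu$ only through $|\mu|$, then shows that $f^*$ has order $\rho^*=\log_\gamma d$.

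The heart of the argument is to verify that every finite attracting fixed point $w_0$ of $p$ is an asymptotic value of $f^*$. I would fix $\delta>0$ small enough that $p(\overline{B(w_0,\delta)})\subset B(w_0,\delta)\subset\F_{w_0}(p)$, which is possible because $|p'(w_0)|<1$. The set $T=(f^*)^{-1}(B(w_0,\delta))$ is open, nonempty (by Picard, $f^*$ omits at most one value) and satisfies $\mu T\subset T$ by the functional equation. For any $z_0\in T$ the orbit $(\mu^n z_0)_{n\geq 0}$ lies in $T$, escapes to $\infty$, and satisfies $f^*(\mu^n z_0)=p^{(n)}(f^*(z_0))\to w_0$ geometrically. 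To interpolate this discrete orbit by a continuous path to infinity along which $f^*\to w_0$, I would pass to the logarithmic coordinate $w=\log z$, where $T$ pulls back to a set $\widetilde T$ invariant under the translation $w\mapsto w+\log\mu$, and then exploit this translation structure together with the $2\pi i$-periodicity to extract an unbounded connected subset of $\widetilde T$ whose image under $\exp$ is the desired asymptotic path. Hence $w_0$ is an asymptotic value of $f^*$.

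Putting the pieces together, Denjoy--Carleman--Ahlfors applied to $f^*$ bounds the number of distinct finite asymptotic values by $2\rho^*=2\log_\gamma d$, and since the finite attracting fixed points of $p$ are pairwise distinct and all occur among them, inequality~\eqref{eq:ahlfors} follows. The main obstacle is the interpolation step in the previous paragraph: the naive ``chained discs'' construction fails because the overlap condition forces $r\gtrsim|z_0|\cdot|\mu-1|/(1+|\mu|)$, which clashes with the smallness of $r$ required to keep $f^*(B(z_0,r))\subset B(w_0,\delta)$. The log-plane reduction (or equivalently, a connectivity argument inside each annulus $|\mu|^n\leq|z|<|\mu|^{n+1}$ using the invariance $\mu T\subset T$) is the ingredient that circumvents this difficulty.
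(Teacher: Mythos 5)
Your overall architecture is exactly the one the paper intends: the paper offers no detailed argument, deriving the bound in one sentence from Ahlfors' theorem on asymptotic values (Denjoy--Carleman--Ahlfors) together with Valiron's growth theorem, and your reconstruction — linearise at a fixed point $\xi^*$ realising $\gamma$ (repelling since $\gamma\geq\lambda>1$ under the standing normalisation), note that the resulting Poincar\'e function $f^*$ has order $\log_\gamma d$, and bound the number of distinct finite asymptotic values by twice the order — is the right reading, including the observation that linearising at $0$ would only give $2\log_\lambda d$. The routine pieces (existence of $f^*$, Valiron's order computation, distinctness of the attracting fixed points, applicability of Denjoy--Carleman--Ahlfors) are fine.

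The gap is in the one step that carries all the content: that every finite attracting fixed point $w_0$ is an asymptotic value of $f^*$. You correctly see that the chained-discs argument fails, but the proposed repair does not work as stated. Invariance of $\widetilde T$ under $w\mapsto w+\log\mu$ together with $2\pi i$-periodicity does \emph{not} force the existence of an unbounded connected subset: the union of small pairwise disjoint discs centred at the points $w_1+n\log\mu+2\pi i m$ ($n\geq0$, $m\in\Z$) is open, invariant under both translations, and all of its components are bounded. So the ``translation structure'' alone cannot produce an asymptotic path; what is really needed is a statement about the components of $T=(f^*)^{-1}(B(w_0,\delta))$, for instance that some component $\Omega$ satisfies $\mu^k\Omega\subset\Omega$ for some $k\geq1$ (then your chaining does finish the proof: a path $\sigma\subset\Omega$ from $z_0$ to $\mu^k z_0$ yields $\Gamma=\bigcup_j\mu^{jk}\sigma$, and $f^*\to w_0$ along $\Gamma$ because $p^{(jk)}\to w_0$ uniformly on the compact set $f^*(\sigma)$), or equivalently that $T$ has unbounded components, compatibly nested as $\delta\downarrow0$. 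Multiplication by $\mu$ only induces a self-map of the set of components of $T$, and nothing in your sketch rules out that this action wanders through infinitely many bounded components, in which case $w_0$ would not be an asymptotic value at all. This is precisely the point the paper leaves implicit (it tacitly uses that attracting fixed points are asymptotic values of Poincar\'e functions, a fact belonging to the circle of ideas of Er\"emenko and Sodin on the value distribution of Poincar\'e functions); to make your proof complete you must either prove such a component/tract statement or invoke a reference for it, rather than the log-plane reduction as described.
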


\section{Zeros of the Poincar\'e  function and Julia sets}
\label{sec:zeros-poinc-funct}
In this section we relate the distribution of zeros of the Poincar\'e function
in angular regions to geometric properties of the Julia set $\J(p)$ of the
polynomial $p$.
\begin{theorem}\label{thm8}
Let $p$ be a real polynomial with $p(0)=0$ and $p'(0)=\lambda>1$.
Then the following are equivalent
\begin{enumerate}
\item\label{thm8.1}
$\displaystyle{\forall r>0: W_{\alpha,\beta}\cap \J(p)\cap B(0,r)\neq\emptyset}$
\item\label{thm8.3} $W_{\alpha,\beta}$ contains a zero of $f$.
\item\label{thm8.2} $W_{\alpha,\beta}$ contains infinitely many zeros of $f$.
\end{enumerate}
\end{theorem}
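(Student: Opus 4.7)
The plan is to verify $(3)\Rightarrow(2)\Rightarrow(3)$ and $(1)\Leftrightarrow(2)$, repeatedly exploiting that $0$ is a repelling fixed point (so $0\in\J(p)$) and that $f^{(-1)}$ exists as a local inverse of $f$ near $0$ with derivative $1$.

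The equivalence $(2)\Leftrightarrow(3)$ is essentially automatic: $(3)\Rightarrow(2)$ is vacuous, and for $(2)\Rightarrow(3)$ a single zero $z_0\in W_{\alpha,\beta}$ immediately produces infinitely many, since \eqref{eq:poincare} and $p(0)=0$ give $f(\lambda^k z_0)=p^{(k)}(0)=0$ for every $k\geq 0$, and because $\lambda>1$ is real and positive these iterates preserve the argument of $z_0$ while their moduli march off to infinity.

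For $(2)\Rightarrow(1)$ I run the functional equation backwards. Setting $\xi_n:=f(z_0/\lambda^n)$, iteration of \eqref{eq:poincare} yields $p^{(n)}(\xi_n)=f(z_0)=0$, so $\xi_n$ lies in $p^{-n}(0)\subset\J(p)$ by complete invariance. The normalisation $f'(0)=1$ gives $\xi_n=(z_0/\lambda^n)(1+o(1))$, so $|\xi_n|\to 0$ and $\arg\xi_n\to\arg z_0\in(\alpha,\beta)$; the isolated character of the zero $0$ of $f$ forces $\xi_n\neq 0$ for $n$ large, producing points of $\J(p)\cap W_{\alpha,\beta}$ arbitrarily close to $0$.

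For $(1)\Rightarrow(2)$ I plan the reverse construction. By Brolin's theorem \cite{Brolin1965:invariant_sets_under,Ransford1995:potential_theory_complex_plane} the backward orbit $\bigcup_{n\geq0}p^{-n}(0)$ is dense in $\J(p)$, the hypothesis $p'(0)=\lambda$ ensuring that $0$ is non-exceptional. By (1) I pick $w_*\in\J(p)\cap W_{\alpha,\beta}$ with $|w_*|$ arbitrarily small, trap it in a strict subwedge $W_{\alpha',\beta'}\subset W_{\alpha,\beta}$ with $[\alpha',\beta']\subset(\alpha,\beta)$, and approximate it by some $\tilde w\in p^{-n}(0)\cap W_{\alpha',\beta'}$ lying in the domain of $f^{(-1)}$. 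The point
\begin{equation*}
z_0:=\lambda^n\,f^{(-1)}(\tilde w)
\end{equation*}
then satisfies $f(z_0)=p^{(n)}(f(f^{(-1)}(\tilde w)))=p^{(n)}(\tilde w)=0$, and $\arg z_0=\arg f^{(-1)}(\tilde w)$, which lies in $(\alpha,\beta)$ provided $|\tilde w|$ is small enough that the $\BigOh(\tilde w^2)$ perturbation in $f^{(-1)}(\tilde w)=\tilde w+\BigOh(\tilde w^2)$ cannot push the argument beyond $\alpha$ or $\beta$; the strict inclusion of the subwedge supplies the required buffer. This argument-control step is the only genuinely delicate point, and everything else reduces to routine bookkeeping with \eqref{eq:poincare} and the normalisations at $0$.
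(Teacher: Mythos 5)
Your overall route is the same as the paper's: (2) and (3) are equivalent because a zero $z_0$ propagates to zeros $\lambda^k z_0$ of the same argument; (2) implies (1) by pulling the zero back along $z_0/\lambda^n$, using $f(z)=z(1+o(1))$ near $0$ and complete invariance of $\J(p)$; and (1) implies (2) by locating a preimage of $0$ close to the origin inside the wedge and transporting it through the local inverse of $f$. The first two implications are correct and essentially identical to the paper's argument, including the point that $\xi_n\neq0$ for large $n$.

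The gap is in the step you yourself call delicate, and it is a quantifier problem. You pick $w_*\in\J(p)\cap W_{\alpha,\beta}$ with $|w_*|$ small \emph{first}, and only then choose the subwedge $W_{\alpha',\beta'}$ around it; hence the angular buffer $b=\min(\alpha'-\alpha,\beta-\beta')$ is at most $\min(\arg w_*-\alpha,\,\beta-\arg w_*)$, a quantity dictated by $w_*$. Your final requirement is that $|\tilde w|$ be small compared with $b$, because $f^{(-1)}(\tilde w)=\tilde w+\BigOh(\tilde w^2)$ shifts the argument by $\BigOh(|\tilde w|)$; but $\tilde w$ approximates $w_*$, so $|\tilde w|$ is of the order of $|w_*|$ and cannot be decreased afterwards. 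Hypothesis (1) alone does not guarantee a point with $\mathrm{dist}(\arg w_*,\{\alpha,\beta\})$ large compared with $|w_*|$: the Julia points it supplies could approach $0$ with arguments tending to $\alpha$ or $\beta$ faster than linearly in the modulus, and in that case no admissible choice of $w_*,\alpha',\beta',\tilde w$ exists and your construction yields no zero in $W_{\alpha,\beta}$. The paper orders the choices the other way: as its very first step it fixes one $\eps>0$ such that $W_{\alpha+\eps,\beta-\eps}\cap\J(p)\cap B(0,r)\neq\emptyset$ for \emph{every} $r>0$, and only then chooses $r$ so small that $W_{\alpha+\eps,\beta-\eps}\cap B(0,r)\subset f\left(W_{\alpha,\beta}\right)$, which is legitimate because the buffer $\eps$ is independent of the point and eventually dominates the $\BigOh(|z|)$ angular distortion coming from $f'(0)=1$. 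To close your argument you must either justify such a uniform $\eps$ (or at least an angular margin bounded below by a fixed multiple of $|w_*|$) as a consequence of (1), or restructure your choices in the paper's order, stating that uniform property explicitly.
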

\begin{proof}
We first remark that \ref{thm8.3} and \ref{thm8.2} are trivially
equivalent, since $f(z_0)=0$ implies that $f(\lambda^n z_0)=0$.

For the proof of ``\ref{thm8.1}$\Rightarrow$~\ref{thm8.3}'' we take
$0<\eps<\frac{\beta-\alpha}2$ so small that
\begin{equation*}
\forall r>0: W_{\alpha+\eps,\beta-\eps}\cap \J(p)\cap B(0,r)\neq\emptyset.
\end{equation*}
Then we take $r>0$ so small that
\begin{equation}\label{eq:angle}
W_{\alpha+\eps,\beta-\eps}\cap B(0,r)\subset f\left(W_{\alpha,\beta}\right),
\end{equation}
which is possible by conformity of $f$ and $f'(0)=1$. Since the preimages of
$0$ are dense in $\J(p)$, there exists
$\eta\in W_{\alpha+\eps,\beta-\eps}\cap B(0,r)$ and $n\in\N$ such that
$p^{(n)}(\eta)=0$. By \eqref{eq:angle} there exists $\xi\in W_{\alpha,\beta}$
such that $f(\xi)=\eta$, from which we obtain
\begin{equation*}
f(\lambda^n\xi)=p^{(n)}(f(\xi))=p^{(n)}(\eta)=0.
\end{equation*}

For the proof of ``\ref{thm8.2}$\Rightarrow$~\ref{thm8.1}'' we take $z_0\in
W_{\alpha,\beta}$ with $f(z_0)=0$. Then
\begin{equation*}
\forall n\in\N: f(\lambda^{-n}z_0)\in\J(p).
\end{equation*}
For any $r>0$ and $n$ large enough $f(\lambda^{-n}z_0)\in W_{\alpha,\beta}\cap
B(0,r)$, which gives \ref{thm8.1}.
\end{proof}
Similar arguments show
\begin{theorem}\label{thm:zeros-on-line}
Let $p$ be a real polynomial with $p(0)=0$ and $p'(0)=\lambda>1$. Then
\begin{equation}\label{eq:negative-zeros}
  \J(p)\subset\R^-\cup\{0\}\Leftrightarrow \text{ all zeros of }f
\text{ are non-positive real}
\end{equation}
and
\begin{equation}\label{eq:real-zeros}
\J(p)\subset\R\Leftrightarrow \text{ all zeros of }f\text{ are real.}
\end{equation}
\end{theorem}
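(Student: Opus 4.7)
My plan is to run the same argument scheme as in Theorem~\ref{thm8}, using its dictionary between angular accumulation of $\J(p)$ at $0$ and the presence of zeros of $f$ in sectors.

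The forward implications $(\Rightarrow)$ of both \eqref{eq:negative-zeros} and \eqref{eq:real-zeros} are immediate: if $\J(p)$ is contained in the real set $S$ (either $\R^-\cup\{0\}$ or $\R$), then $\C\setminus S$ is a countable union of open sectors $W_{\alpha,\beta}$, each disjoint from $\J(p)$; Theorem~\ref{thm8} then forbids zeros of $f$ in any such sector, so all zeros of $f$ lie in $S$.

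For the reverse implications I would argue contrapositively, producing a zero of $f$ outside $S$. By Theorem~\ref{thm8} it is enough to exhibit a sector $W_{\alpha,\beta}\subset\C\setminus S$ in which $\J(p)$ accumulates at $0$, and two invariance principles supply this. First, the Schr\"oder linearisation $\psi$ at $0$ — the local inverse of $f$, satisfying $\psi(p(z))=\lambda\psi(z)$ and having real Taylor coefficients since $p$ does — conjugates $p$ locally to $w\mapsto\lambda w$, so $\psi(\J(p)\cap U)$ is invariant under $w\mapsto\lambda^{-1}w$ in its domain; hence every point of $\J(p)\cap U$ spawns a sequence in $\J(p)$ tending to $0$ along its own ray. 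Second, complete invariance of $\J(p)$ together with $\bigcup_n p^{(n)}(U)=\C\setminus\{\infty\}$ yields $\bigcup_n p^{(n)}(\J(p)\cap U)=\J(p)$; since $p$ preserves $\R$, the containment $\J(p)\cap U\subset\R$ propagates to $\J(p)\subset\R$.

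For \eqref{eq:real-zeros}$(\Leftarrow)$ this closes the argument cleanly: real zeros of $f$ force $\J(p)\cap W\cap B(0,r)=\emptyset$ for every sector $W$ off $\R$ and every $r>0$, hence $\J(p)\cap U\subset\R$ by the ray-invariance, hence $\J(p)\subset\R$ by the forward iteration. For \eqref{eq:negative-zeros}$(\Leftarrow)$ the same scheme delivers $\J(p)\cap U\subset\R^-\cup\{0\}$ and then $\J(p)\subset\R$. The hard step — and the only genuine obstacle I foresee — is ruling out $\J(p)\cap\R^+\neq\emptyset$, since forward iteration of $p$ does \emph{not} preserve the sign of real points. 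To handle it I would apply Theorem~\ref{thm8} directly to the sector $W_{-\eps,\eps}$: the real local inverse $\phi$ of $p$ at $0$ (with $\phi'(0)=1/\lambda>0$, hence sign-preserving on $\R$ near $0$), iterated on a positive real point of $\J(p)$ once a sufficiently deep preimage of it has been brought into the linearisation domain by density of pre-images in $\J(p)$, forces $\J(p)\cap\R^+$ to accumulate at $0$; Theorem~\ref{thm8} then delivers a zero of $f$ in $W_{-\eps,\eps}$, necessarily outside $\R^-\cup\{0\}$, contradicting the hypothesis.
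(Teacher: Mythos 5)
Your general scheme is the intended one (the paper itself only says ``similar arguments'' to Theorem~\ref{thm8}), and most of it works: the forward implications via zero-free sectors, and the reverse implication of \eqref{eq:real-zeros} via the two invariance principles, are sound. A small quantifier slip: for a zero-free sector $W$, Theorem~\ref{thm8} only gives $W\cap\J(p)\cap B(0,r)=\emptyset$ for \emph{some} $r$, not for every $r$; it is precisely your ray-invariance step (pulling a non-real Julia point in the linearisation domain back under the branch of $p^{-1}$ fixing $0$, so that Julia points approach $0$ along a fixed non-real direction) that repairs this, so the argument should be phrased that way. With that, $\J(p)\cap U\subset\R$, and the blowing-up property $\J(p)=p^{(N)}(\J(p)\cap U)$ together with the reality of $p$ gives $\J(p)\subset\R$.

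The step you yourself flag as the hard one in \eqref{eq:negative-zeros} is, however, not closed by your proposal. After $\J(p)\subset\R$ and $\J(p)\cap U\subset\R^-\cup\{0\}$ are known, the scenario to exclude is: $\J(p)\cap\R^+\neq\emptyset$ while $\J(p)\cap(0,\delta)=\emptyset$ for some $\delta>0$, i.e.\ $0$ is the left endpoint of a gap with Julia points beyond it. Your fix takes $x_0\in\J(p)\cap\R^+$, brings a deep preimage of $x_0$ into the linearisation domain, and iterates the sign-preserving branch $\phi$ there --- but nothing guarantees that this preimage is positive: preimages under the other branches of $p^{-1}$ do not preserve sign, and in the problematic scenario \emph{every} Julia point in $B(0,\delta)$ is $\leq0$, so the point you iterate is negative, $\phi$-iteration only produces Julia points tending to $0$ from the left, and Theorem~\ref{thm8} then yields zeros near the negative axis, perfectly consistent with the hypothesis. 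In other words, producing a positive Julia point inside the linearisation domain is essentially equivalent to what you are trying to prove, so the argument is circular at the crux. The scenario can be excluded, but by a different mechanism, e.g.: let $v=\inf\bigl(\J(p)\cap\R^+\bigr)$ and suppose $v>0$, so $v\in\J(p)$ and $I=(0,v)$ is disjoint from $\J(p)$. Since $p$ is real and $p(x)=\lambda x+\BigOh(x^2)$ with $\lambda>1>0$, the interval $p(I)$ is disjoint from $\J(p)$, lies in $\R$, and contains positive points arbitrarily close to $0$; by connectedness it can contain neither $0$ nor $v$, whence $p(I)\subseteq I$. Thus all orbits from $I$ stay bounded, although $I$ lies in the Fatou set; but $\J(p)\subset\R$ forces $\F(p)=\F_\infty(p)$ (a bounded Fatou component would have its boundary inside $\J(p)\subset\R$, which is impossible for a bounded open set), so every Fatou orbit tends to $\infty$ --- a contradiction. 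Note the positivity of $\lambda$ is essential here. With this (or an equivalent) argument inserted, your proof of the $\Leftarrow$ direction of \eqref{eq:negative-zeros} is complete.
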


\section{Real Julia set}\label{sec:real-julia-set}
\begin{lemma}\label{lem1}
  Let $p$ be a real polynomial of degree $d>1$. Then the Julia-set $\J(p)$ is
  real, if and only if there exists an interval $[a,b]$ such that
\begin{equation}\label{eq:inv-int}
p^{(-1)}\left([a,b]\right)\subseteq[a,b].
\end{equation}
\end{lemma}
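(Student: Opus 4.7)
The plan is to prove the two implications separately. The reverse direction is a short formal consequence of iterating the hypothesis, while the forward direction requires structural input about polynomials whose Julia set lies on the real axis.

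For $(\Leftarrow)$, assume $p^{(-1)}([a,b])\subseteq[a,b]$. Iteration yields $p^{(-n)}([a,b])\subseteq[a,b]$ for every $n\geq0$. Since $[a,b]$ is infinite and the exceptional set of $p$ contains at most two points, I can pick a non-exceptional $y_0\in[a,b]$; because the Julia set is the closure of the full backward orbit of any non-exceptional point, this gives
\begin{equation*}
  \J(p)=\overline{\bigcup_{n\geq0}p^{(-n)}(y_0)}\subseteq[a,b]\subseteq\R.
\end{equation*}

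For $(\Rightarrow)$, assume $\J(p)\subseteq\R$ and set $a=\min\J(p)$, $b=\max\J(p)$; the candidate interval is $[a,b]$ itself. I would first show $\K(p)=\J(p)$ by ruling out bounded Fatou components: such a component $U$ would be open, connected, and bounded with $\partial U\subseteq\J(p)\subseteq\R$, but any $z_0\in U$ with $\Im z_0>0$ admits an upward vertical ray that must exit $U$ at a boundary point of strictly positive imaginary part, contradicting $\partial U\subseteq\R$. Hence $\K(p)=\J(p)\subseteq[a,b]$.

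The main assertion is then $p^{(-1)}([a,b])\subseteq[a,b]$. For $y\in\J(p)$ it is immediate from the complete invariance $p^{(-1)}(\J(p))=\J(p)\subseteq[a,b]$. For $y\in[a,b]\setminus\J(p)$ I would appeal to the structural consequence that $\J(p)\subseteq\R$ forces every critical point of $p$ to be real and every critical value to lie in $\{a,b\}\cup(\R\setminus[a,b])$. Then $p|_\R$ splits $\R$ into $d$ monotonicity intervals whose images alternate in Chebyshev-like fashion and each contain $[a,b]$, so degree counting produces exactly $d$ real preimages of any $y\in[a,b]$, all inside $[a,b]$.

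The main obstacle is establishing this structural input. I plan to analyse the real algebraic set $p^{(-1)}(\R)$, which consists of $\R$ together with finitely many real-analytic ``secondary'' components $\gamma$ meeting $\R$ precisely at real critical points of $p$. Along each $\gamma\subseteq\C\setminus\R$, the local expansion $p(z)-p(c)\approx\frac{1}{2}p''(c)(z-c)^2$ at the attaching critical point $c$ determines on which side of the critical value $p(\gamma)$ lies, and tracing $\gamma$ toward its far endpoint (at infinity or at another real critical point) shows $p(\gamma)\subseteq\R\setminus(a,b)$. This prevents non-real preimages of points in $(a,b)$; combined with the real-preimage count above, the proof is complete.
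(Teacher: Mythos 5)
Your backward implication is fine and is essentially the paper's own argument (pick a non-exceptional point of $[a,b]$, its backward orbit stays in $[a,b]$ and accumulates on all of $\J(p)$); only note that for $y_0\notin\J(p)$ you have the inclusion $\J(p)\subseteq\overline{\bigcup_{n}p^{(-n)}(y_0)}$ rather than equality, but the inclusion is all you use.

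The forward implication, however, has a genuine gap: you make everything rest on the ``structural input'' that all critical points of $p$ are real, that no critical value lies in $(a,b)$, and that the images of the $d$ monotone laps of $p|_\R$ alternate so that each covers $[a,b]$ --- and this input is precisely the hard content of the lemma, which you do not establish. The sketch you offer for it (decomposing $p^{(-1)}(\R)$ into $\R$ plus secondary arcs attached at real critical points, then ``tracing'' each arc to conclude $p(\gamma)\subseteq\R\setminus(a,b)$) is not carried out, and as stated it cannot succeed: it nowhere uses the hypothesis $\J(p)\subseteq\R$ (for a generic real polynomial points of $(a,b)$ \emph{do} have non-real preimages), and non-real components of $p^{(-1)}(\R)$ need not attach to $\R$ at real critical points at all (e.g.\ $z^3+az$ with $a>0$ has non-real critical points), so the asserted picture of $p^{(-1)}(\R)$ already presupposes the reality of the critical points. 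Moreover, ``critical values outside $(a,b)$'' alone does not give that every lap image contains $[a,b]$: two consecutive critical values could a priori lie on the same side of $[a,b]$, so the Chebyshev-like alternation would need its own proof. The paper proceeds in the opposite logical order and avoids all of this: since $\J(p)$ is perfect, choose $\xi,\eta\in\J(p)$ with $a<\xi<a+\eps$ and $b-\eps<\eta<b$, avoiding the finitely many critical values; complete invariance places the $d$ distinct preimages of each of $\xi,\eta$ in $\J(p)\subseteq[a,b]$; since $p-\xi$ then has $d$ simple real roots, all $d-1$ critical points are real and interlace them, so on each monotone lap $p$ maps the subinterval between that lap's preimages of $\xi$ and $\eta$ (both in $[a,b]$) onto $[\xi,\eta]$, accounting for all $d$ preimages of every point of $[\xi,\eta]$; letting $\eps\to0$ and using $p^{(-1)}(\{a,b\})\subseteq\J(p)$ finishes. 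The structural facts you want are corollaries of this counting, not available prerequisites; your preliminary step $\K(p)=\J(p)$ is correct but unnecessary.
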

\begin{proof}
  Assume first that $\J(p)\subset\R$ and take the interval
  $[a,b]=[\min\J(p),\max\J(p)]$. Let $\eps>0$. Since $\J(p)$ is perfect, there
  exist $\xi,\eta\in\J(p)$ with $a<\xi<a+\eps<b-\eps<\eta<b$. All preimages of
  $\xi$ and $\eta$ are in $\J(p)$ by the invariance of $\J(p)$. Furthermore,
  all these preimages are distinct. Therefore, every value $x\in[\xi,\eta]$ has
  exactly $d$ distinct preimages in $[a,b]$ by continuity of $p$. Since $\eps$
  was arbitrary and the two points $a,b$ also have all their preimages in
  $\J(p)\subset[a,b]$, we have proved \eqref{eq:inv-int}.

  Assume on the other hand that $[a,b]$ satisfies \eqref{eq:inv-int}. Since the
  map $p$ has only finitely many critical values, there exists $x\in[a,b]$ such
  that the backward iterates of $x$ are dense in the Julia set. By
  \eqref{eq:inv-int} all these backward iterates are real; therefore $\J(p)$ is
  real.
\end{proof}
\begin{remark}\label{rem1}
By the above proof we can always assume $[a,b]=[\min \J(p),\max
\J(p)]$. Furthermore, we have
\begin{equation*}
 p\left(\{\min \J(p),\max \J(p)\}\right)\subseteq\{\min \J(p),\max \J(p)\},
\end{equation*}
which implies that at least one of the two end points of this interval is
either a fixed point, or they form a cycle of length $2$.
\end{remark}
\begin{theorem}\label{thm1}
  Let $p$ be a polynomial of degree $d>1$ with real Julia set $\J(p)$. Then for
  any fixed point $\xi$ of $p$ with $\min \J(p)<\xi<\max \J(p)$ we have
  $|p'(\xi)|\geq d$. Furthermore, $|p'(\min\J(p))|\geq d^2$ and $|p'(\max
  \J(p))|\geq d^2$. Equality in one of these inequalities implies that $p$ is
  linearly conjugate to the Chebyshev polynomial $T_d$ of degree $d$.
\end{theorem}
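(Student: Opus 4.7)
The plan is to exploit the containment $\J(p)\subseteq[a,b]:=[\min\J(p),\max\J(p)]$ (guaranteed by Lemma~\ref{lem1}) and to compare the Green function $G_{\J}(z):=\log|g(z)|$ associated with the B\"ottcher function $g$ (Section~\ref{sec:bottch-funct-green}) against the Green function $G_{[a,b]}$ of the invariant interval, converting this comparison into multiplier bounds via the functional equation $G_{\J}(p(z))=d\,G_{\J}(z)$ coming from~\eqref{eq:boettcher}.

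First one sets up the Green function monotonicity. Since $p$ is monic, $\mathrm{cap}(\J(p))=1$, while $\mathrm{cap}([a,b])=(b-a)/4$, so the inclusion $\J(p)\subseteq[a,b]$ forces $b-a\geq4$. The difference $G_{\J}-G_{[a,b]}$ is harmonic on $\C\setminus[a,b]$, has nonnegative boundary values on $[a,b]$ (since $G_{[a,b]}$ vanishes there while $G_{\J}\geq0$), and tends to the nonnegative constant $\log((b-a)/4)$ at infinity; the minimum principle then yields $G_{\J}(z)\geq G_{[a,b]}(z)$ on $\C\setminus[a,b]$. A direct expansion of the Joukowski-type conformal map $\C\setminus[a,b]\to\C\setminus\overline{\mathbb{D}}$ gives
\[
G_{[a,b]}(\xi+i\eps)\sim\frac{\eps}{\sqrt{(\xi-a)(b-\xi)}}\text{ for }\xi\in(a,b),\qquad
G_{[a,b]}(\xi+i\eps)\sim\sqrt{\tfrac{2\eps}{b-a}}\text{ for }\xi\in\{a,b\},
\]
as $\eps\downarrow0$.

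The functional equation $G_{\J}(p(z))=d\,G_{\J}(z)$ combined with the linearization $p(\xi+u)=\xi+\lambda u+O(u^{2})$, $\lambda=p'(\xi)$, produces, by exactly the reasoning of Theorem~\ref{thm:poincare-asymp} and Section~\ref{sec:furth-analys-peri} applied after conjugating the fixed point to the origin, the local expansion
\[
G_{\J}(\xi+i\eps)=\eps^{\rho}\,\Phi_{\xi}(\log_{|\lambda|}\eps)+O(\eps^{\rho+1}),\qquad \rho=\frac{\log d}{\log|\lambda|},
\]
with $\Phi_{\xi}$ a bounded $1$-periodic function. The hypothesis that $\xi$ lies in $\J(p)$ (so that $G_{\J}(\xi)=0$) is justified by a Singer-type argument: the immediate basin of any attracting fixed point in $(a,b)$ would contain a critical point, but every critical orbit is either eventually mapped to $\{a,b\}$ or escapes to infinity, so no interior fixed point can be attracting. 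Combining the last two displays, the lower bound $\eps^{\rho}\Phi_{\xi}(\cdots)\geq c\,\eps$ (resp.\ $\geq c\sqrt{\eps}$ at an endpoint) must hold for all small $\eps>0$, and since $\Phi_{\xi}$ is bounded this forces $\rho\leq1$ (resp.\ $\rho\leq1/2$), that is, $|p'(\xi)|\geq d$ (resp.\ $|p'(\xi)|\geq d^{2}$).

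For the equality statement, if $|p'(\xi)|$ attains its lower bound then $\rho$ equals $1$ or $1/2$ and the leading asymptotic of $G_{\J}$ at $\xi$ matches that of $G_{[a,b]}$. The plan is to use the self-similarity $G_{\J}(p^{n}(z))=d^{n}G_{\J}(z)$ together with Brolin's equidistribution of $\bigcup_{n}p^{-n}(\xi)$ on $\J(p)$ to propagate this borderline vanishing rate to a dense subset of $\J(p)$, which by a capacity argument forces $\J(p)=[a,b]$; Beardon's classification (Theorem~1.4.1 of \cite{Beardon1991:iteration_rational_functions}, already invoked in the proof of Theorem~\ref{thm:constant}) then yields that $p$ is linearly conjugate to $T_{d}$, the alternative $z^{d}$ being excluded by the reality of $\J(p)$. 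The hard part is precisely this last step --- converting a borderline asymptotic at a single fixed point into the global statement $\J(p)=[a,b]$; an alternative route is to try to show that the periodic function $\Phi_{\xi}$ is constant and invoke Theorem~\ref{thm:constant} after conjugation, but the Green-function inequality $G_{\J}\geq G_{[a,b]}$ alone only bounds $\min\Phi_{\xi}$ from below, so this rigidity step needs an additional argument.
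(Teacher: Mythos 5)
Your proof of the two inequalities is essentially correct, and it is a genuinely different route from the paper's. You argue locally at the fixed point: the domain monotonicity $G_{\J}\ge G_{[a,b]}$ gives a lower bound $c\,\eps$ (interior point) resp.\ $c\sqrt{\eps}$ (endpoint) for the vertical decay of the Green function, while the local expansion \eqref{eq:logg} bounds it above by $O(\eps^{\rho})$, forcing $\rho\le1$ resp.\ $\rho\le\frac12$. The paper instead works at infinity: it applies the Phragm\'en--Lindel\"of-type Lemma~\ref{lem2} (Nevanlinna representation of positive harmonic functions in an angle) to $f(z)/z$, which is bounded away from $0$ in the half-plane sector (interior fixed point) resp.\ in the slit-plane sector (endpoint) while $\log|f|$ grows like $|z|^{\rho}$ there; both arguments give the same exponent constraints. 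Two repairs are needed in your write-up: the ``Singer-type'' claim that every critical orbit lands on $\{a,b\}$ or escapes is unjustified and also unnecessary --- the right observation is that $\J(p)\subset\R$ forces $\K(p)=\J(p)$ to have empty interior, so $\F(p)=\F_\infty(p)$ and there are no bounded Fatou components; hence every finite fixed point lies in $\K(p)=\J(p)$, its multiplier is real, and it cannot be attracting or parabolic, so $|p'(\xi)|>1$ and the Poincar\'e--B\"ottcher machinery applies (for $p'(\xi)<-1$ pass to $p^{(2)}$, as the paper does, which changes neither $\rho$ nor $\J(p)$).

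The genuine gap is the rigidity statement, and you flag it yourself: the equidistribution-plus-capacity plan is only a plan, and, as you note, $G_{\J}\ge G_{[a,b]}$ along a single ray cannot force the periodic function to be constant. The missing idea (which is how the paper argues) is to use the sign of the Green function globally, not its size along one ray: assume $\rho=1$ (resp.\ $\rho=\frac12$ at an endpoint) and that $\J(p)$ is \emph{not} an interval, hence a Cantor set; then $H(z)=\Re\log g(f(z))=\int_{\J(p)}\log|f(z)-x|\,d\mu(x)\ge0$ for \emph{all} $z$, with equality exactly when $f(z)\in\J(p)$, while Theorem~\ref{thm:poincare-asymp} gives $H(z)=\Re\bigl(zF(\log_d z)\bigr)$ (up to lower-order terms). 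By Theorem~\ref{thm:constant} the function $F$ is non-constant for a Cantor Julia set, and since the mean of $F$ is real, $\Im F(t+i\phi)$ has zero mean in $t$; it therefore takes positive values (if it vanished identically, $f$ would map a ray into $\J(p)$, putting an analytic arc inside the Cantor set). Evaluating at $z=iy$ yields $H(iy)=-y\,\Im F\bigl(\log_d y+i\tfrac{\pi}{2\log d}\bigr)<0$ for suitable large $y$, a contradiction; the endpoint case is analogous. Thus equality forces $\J(p)=[\min\J(p),\max\J(p)]$, and only then does Beardon's classification give conjugacy to $T_d$. Without an argument of this kind your proposal establishes the inequalities but not the equality case of the theorem.
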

\begin{remark}
  This theorem can be compared to
  \cite[Theorem~2]{Buff2003:bieberbach_conjecture_dynamics} and
  \cite{Levin1991:pommerenke's_inequality,
    Pommerenke1986:conformal_mapping_iteration}, where estimates for the
  derivative of $p$ for connected Julia sets are derived. Furthermore, in
  \cite{Eremenko_Levin1992:estimation_characteristic_exponents} estimates for
  $\frac1n\log|(p^{(n)})'(z)|$ for periodic points of period $n$ are given.
\end{remark}
Before we give a proof of the theorem, we present a lemma, which is of some
interest on its own. A similar result is given in 
\cite[Chapter~V, Section~2, Lemma~3]{Levin1980:distribution_zeros_entire}.

\begin{lemma}\label{lem2}
Let $f$ be holomorphic in the angular region $W_{\alpha,\beta}$
If there exists a positive constant $M$ such that
\begin{equation*}
\forall z\in W_{\alpha,\beta}:|f(z)|\geq M,
\end{equation*}
then
\begin{equation*}
\forall \eps>0\,\,\, \exists A,B>0\,\,\, \forall z\in W_{\alpha+\eps,\beta-\eps}:
|f(z)|\leq B\exp(A|z|^\kappa)
\end{equation*}
with $\kappa=\frac\pi{\beta-\alpha}$.
\end{lemma}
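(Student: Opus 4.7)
The plan is to transport the problem to the upper half-plane via the conformal map $\phi(z):=(e^{-i\alpha}z)^{\kappa}$, which sends $W_{\alpha,\beta}$ onto $\{\Im w>0\}$ and the smaller sector $W_{\alpha+\eps,\beta-\eps}$ onto $\{w\in\C:\kappa\eps<\arg w<\pi-\kappa\eps\}$. The crucial feature of the latter set is that $\Im w\geq\sin(\kappa\eps)\,|w|$, i.e.\ $\Im w$ is comparable to $|w|$. After replacing $f$ by $f/M$ we may assume $M=1$. Since $f$ does not vanish on the simply connected domain $W_{\alpha,\beta}$, a single-valued branch of $\log f$ exists, and $u(z):=\log|f(z)|$ is nonnegative and harmonic on $W_{\alpha,\beta}$. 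Consequently $U:=u\circ\phi^{-1}$ is a nonnegative harmonic function on $\{\Im w>0\}$.

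The next step is the Poisson--Herglotz representation: there exist $c\geq 0$ and a nonnegative Borel measure $\mu$ on $\R$ with $\int d\mu(t)/(1+t^{2})<\infty$ such that
\[
U(w)=c\,\Im w+\int_{\R}\frac{\Im w}{|t-w|^{2}}\,d\mu(t).
\]
The linear term is bounded by $c|w|$. For the integral, the key uniform-in-$t$ estimate is
\[
\frac{\Im w\,(1+t^{2})}{|t-w|^{2}}\leq C_{\eps}\,|w|\qquad(t\in\R,\ |w|\geq 1)
\]
for $w$ in the image of $W_{\alpha+\eps,\beta-\eps}$, proved by splitting into the regimes $|t|\leq 2|w|$, where $|t-w|^{2}\geq(\Im w)^{2}\geq\sin^{2}(\kappa\eps)|w|^{2}$, and $|t|>2|w|$, where $|t-\Re w|\geq|t|/2$ gives $|t-w|^{2}\geq t^{2}/4$; both cases yield an $O(|w|)$ bound. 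Rewriting the Herglotz integrand as $(1+t^{2})^{-1}\cdot\Im w(1+t^{2})/|t-w|^{2}$ and integrating against the finite measure $d\mu(t)/(1+t^{2})$ gives $U(w)\leq C_{\eps}'\,|w|$ on the subsector for $|w|\geq 1$. Transporting back via $|w|=|z|^{\kappa}$ and exponentiating produces $|f(z)|\leq B\exp(A|z|^{\kappa})$ on $W_{\alpha+\eps,\beta-\eps}$ (values of $|z|$ in any bounded range are absorbed into $B$).

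The main obstacle is the uniform-in-$t$ control of the Herglotz kernel. Without the geometric input that $\Im w$ is comparable to $|w|$ on the smaller sector, the Poisson-integral part in the Herglotz formula could in general grow as fast as $|w|^{2}$, which would give only the weaker bound $|f(z)|\leq\exp(A|z|^{2\kappa})$; it is precisely the strict inclusion $W_{\alpha+\eps,\beta-\eps}\subset W_{\alpha,\beta}$ that keeps $\Im w/|w|$ uniformly bounded below on the subsector and thereby recovers the sharp exponent $\kappa$ matching the critical order for the opening angle $\beta-\alpha$.
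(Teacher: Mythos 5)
Your proof is essentially the paper's own argument: reduce conformally to a half-plane, apply the Nevanlinna--Herglotz representation to the positive harmonic function $\log|f|$, and bound the Poisson kernel by a constant times $|w|$ on the smaller sector, where $\Im w\geq\sin(\kappa\eps)\,|w|$; the paper does exactly this in the right half-plane via the estimate $|z-it|\geq\max(|t|\sin\eps,|z|\sin\eps)$, which is the same geometric input as your two-case split. One small caveat (shared with the paper, whose kernel estimate is likewise only valid for $|z|>1$): your closing remark that bounded $|z|$ can be absorbed into $B$ is not literally justified, since $|f|$ need not remain bounded near the vertex of the sector (e.g.\ $f(z)=e^{1/z}$ on the right half-plane satisfies $|f|\geq1$), so the conclusion should be read as an estimate for $|z|$ large, which is all that is used in the application.
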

\begin{proof}
Without loss of generality we can assume that $M=1$, $\alpha=-\frac\pi2$, and
$\beta=\frac\pi2$. In this case $\kappa=1$. The function
\begin{equation*}
v(z)=\log|f(z)|
\end{equation*}
is a positive harmonic function in the right half-plane. Thus it can be
represented by the Nevanlinna formula
(cf.~\cite[p.100]{Levin1996:lectures_entire_functions})
\begin{equation}\label{eq:nevanlinna}
v(x+iy)=\frac x\pi\int_{-\infty}^\infty\frac{d\nu(t)}{|z-it|^2}+\sigma x,
\end{equation}
where $\nu$ denotes a measure satisfying
\begin{equation*}
\int_{-\infty}^\infty\frac{d\nu(t)}{1+t^2}<\infty
\end{equation*}
and $\sigma\geq0$.

In the region given by $|\arg z|\leq\frac\pi2-\eps$ and $|z|>1$ we have
\begin{equation*}
|z-it|\geq\max(|t|\sin\eps,|z|\sin\eps)\geq\max(1,|t|)\sin\eps.
\end{equation*}
From this it follows that
\begin{equation*}
|z-it|^2\geq\frac12(1+t^2)\sin^2\eps,
\end{equation*}
 which gives
\begin{equation*}
\int_{-\infty}^\infty\frac{d\nu(t)}{|z-it|^2}\leq
\frac2{\sin^2\eps}\int_{-\infty}^\infty\frac{d\nu(t)}{1+t^2}\leq B_\eps
\end{equation*}
for $|z|\geq1$ and some $B_\eps>0$. Setting $A=\frac1\pi B_\eps+\sigma$ and
observing that $x\leq|z|$ completes the proof.
\end{proof}

\begin{proof}[Proof of Theorem~\ref{thm1}]
Without loss of generality we may assume that the fixed point $\xi=0$. Then we
consider the solution $f$ of the Poincar\'e equation
\begin{equation*}
f(\lambda z)=p(f(z))
\end{equation*}
with $\lambda=p'(0)$. We assume first that $\lambda>0$.

First we consider the case $\min \J(p)<\xi<\max \J(p)$. In this case the
function $f(z)/z$ tends to infinity uniformly for $z\to\infty$ in the region
$\eps\leq\arg z\leq\pi-\eps$ for any $\eps>0$ by
Theorem~\ref{thm:poincare-asymp}.  Furthermore, we know that
\begin{equation*}
|f(z)|\geq C\exp(A|z|^{\log_\lambda d})
\end{equation*}
in this region for some positive constants $A$ and $C$. Since
$f(z)/z$ does not vanish at $z=0$, this function satisfies the hypothesis of
Lemma~\ref{lem2}, from which we derive that
\begin{equation*}
\log_\lambda d\leq\frac\pi{\pi-2\eps}
\end{equation*}
holds for any $\eps>0$, which implies $\lambda=p'(0)\geq d$.

The proof in the case $\xi=\max \J(p)$ runs along the same lines. The function
$f(z)/z$ tends to infinity uniformly in any region $|\arg z|\leq\pi-\eps$ in
this case, which by Lemma~\ref{lem2} implies
\begin{equation*}
\log_\lambda d\leq\frac\pi{2\pi-2\eps}
\end{equation*}
for all $\eps>0$, and consequently $\lambda=p'(0)\geq d^2$.

For negative $\lambda=p'(0)$ we apply the same arguments to $p^{(2)}$.

For the proof of the second assertion of the theorem, we first assume that the
fixed point $\xi=0$ satisfies $a=\min \J(p)<0<\max \J(p)=b$ and that $p'(0)=d$.
We know that for a suitable linear conjugate $q$ of the Chebyshev polynomial
$T_d$ we
have $q'(0)=d$ and $\J(q)=[a,b]$ with $0\in(a,b)$.

Let us assume now that $p'(0)=d$ and $\J(p)$ is a Cantor subset of the real
line, or after a rotation that $\J(p)$ is a Cantor subset of the imaginary axis
(this makes notation slightly simpler).

By arguments, similar to those in the beginning of
Section~\ref{sec:furth-analys-peri} we can write
\begin{equation}\label{eq:harmonic}
H(z)=\Re\log g(f(z))=\int_{\J(p)}\log|f(z)-x|\,d\mu(x).
\end{equation}
Since $\Re\log g(.)$ is the Green function of $\J(p)$ with pole at $\infty$
(cf.~\cite[Lemma~9.5.5]{Beardon1991:iteration_rational_functions} or
\cite{Ransford1995:potential_theory_complex_plane}), we know that
$H(z)\geq0$ for all $z\in\C$ and $H(z)=0$, if and only if $f(z)\in\J(p)$ (since
$\K(p)=\J(p)$ in the present case). By Theorem~\ref{thm:poincare-asymp} we have
\begin{equation}\label{eq:periodic}
H(z)=\Re \left(zF(\log_d z)\right)=x\Re(F(\log_d z))-y\Im(F(\log_d z))
\text{ for }z=x+iy,
\end{equation}
and by Theorem~\ref{thm:constant} the function $F$ is not constant in the
present case. The periodic function $\Im F(t+i\phi)$ has zero mean, since the
mean of $F$ is real. Thus $\Im F(t+i\phi)$ attains positive and negative values
for any $\phi$. We now take $z=iy\in i\R^+$ to obtain
\begin{equation*}
H(iy)=-y\Im(F(\log_d y+i\frac\pi{2\log d})).
\end{equation*}
Since $\Im F$ attains positive values by the above argument, we get a
contradiction to $H(z)\geq0$ for all $z$.

A similar argument shows that for $0=\max \J(p)$ and $p'(0)=d^2$ the assumption
that the Julia set is not an interval leads to the same contradiction.
\end{proof}
\begin{remark}\label{rem10}
  Lemma~6.4 in \cite{Derfel_Grabner_Vogl2007:asymptotics_poincare_functions}
  proves Theorem~\ref{thm1} for the special case of quadratic polynomials. The
  proof given in \cite{Derfel_Grabner_Vogl2007:asymptotics_poincare_functions}
  is purely geometrical. 
\end{remark}
\begin{remark}\label{rem11}
We have a purely real analytic proof for
  $|p'(\max\J(p))|\geq d^2$, which is motivated by the proof of the extremality
  of the Chebyshev polynomials of the first kind given in
  \cite{Rivlin1974:chebyshev_polynomials}. However, we could not find a similar
  proof for the other assertions of the theorem.
\end{remark}
\subsection{The Julia set is a subset of the negative reals}
\label{sec:negative-julia-set}
As a consequence of Lemma~\ref{lem2} we get that any solution of the
Poincar\'e equation for a polynomial with Julia set contained in the negative
real axis has order $\leq\frac12$. The only solutions of a Poincar\'e equation
with order $\frac12$ in this situation are the functions
\begin{equation*}
  f(z)=\frac1a\left(\cosh\sqrt{2az}-1\right)
\end{equation*}
for
\begin{equation*}
  p(z)=(T_d(az+1)-1)/a,
\end{equation*}
where $a\in\R^+$ and $T_d$ denotes the Chebyshev polynomial of the first kind
of degree $d$. This is also the only case where the periodic function $F$ in
\eqref{eq:f-asymp} is constant in this situation.

\begin{corollary}\label{cor15}
Assume that $p$ is a real polynomial such that $\J(p)$ is real and 
all coefficients $p_i$ ($i\geq2$) of $p$ are non-negative.
Then $\J(p)\subset\R^-\cup\{0\}$ and therefore
\begin{equation}\label{eq:simple-asymp}
f(z)\sim\exp\left(z^\rho F\left(\frac{\log z}{\log\lambda}\right)\right)
\end{equation}
for $z\to\infty$ and $|\Arg z|<\pi$. Here $F$ is a periodic function of period
$1$ holomorphic in the strip given by $|\Im
w|<\frac\pi{\log\lambda}$. Furthermore, for every $\eps>0$
$\Re e^{i\rho\Arg z}F(\frac{\log z}{\log\lambda})$ is bounded between two
positive constants for $|\Arg z|\leq\pi-\eps$.
\end{corollary}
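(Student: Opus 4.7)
First, I would show $\J(p)\subset\R^-\cup\{0\}$. For $x\geq 0$ the non-negativity of $p_2,\ldots,p_{d-1}$ together with $p_1=\lambda>1$ gives
\[
p(x)=x^d+p_{d-1}x^{d-1}+\cdots+p_2x^2+\lambda x\geq\lambda x,
\]
with strict inequality for $x>0$. Hence $p^{(n)}(x)$ is strictly increasing for $x>0$; it cannot be bounded, since a finite limit would be a positive fixed point $x^{*}$ contradicting $p(x^{*})\geq\lambda x^{*}>x^{*}$. Thus $p^{(n)}(x)\to\infty$ and $(0,\infty)\subset\F_\infty(p)$, which combined with the hypothesis that $\J(p)$ is real forces $\J(p)\subset\R^-\cup\{0\}$.

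Next I would promote this to $\K(p)\subset\R^-\cup\{0\}$, which will place every region $W_{-\pi+\eps,\pi-\eps}$ inside $\F_\infty(p)$. The set $\K(p)$ is compact with $\partial\K(p)=\J(p)\subset\R$ by \eqref{eq:boundary}; since $\K(p)$ is closed, $U:=\mathrm{int}(\K(p))$ is bounded and satisfies $\partial U\subset\partial\K(p)\subset\R$. Then $U\cap\{\Im z>0\}$ is both open and closed in the upper half-plane, hence either empty or the whole unbounded half-plane; boundedness rules out the latter, and the symmetric argument below gives $U\subset\R$, which for an open subset of $\C$ forces $U=\emptyset$. Therefore $\K(p)=\J(p)\subset\R^-\cup\{0\}$ and $\F_\infty(p)\supset W_{-\pi+\eps,\pi-\eps}$ for every $\eps>0$.

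For each such $\eps$, Theorem~\ref{thm:poincare-asymp} supplies the expansion \eqref{eq:f-asymp} together with a periodic $F_\eps$ holomorphic on $|\Im w|<(\pi-\eps)/\log\lambda$. Uniqueness of the B\"ottcher conjugation $h(z)=g(f(z))$ identifies all these $F_\eps$ with restrictions of a single function $F$, which therefore extends to a holomorphic, $1$-periodic function on the full strip $|\Im w|<\pi/\log\lambda$.

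The main obstacle, which I address last, is upgrading \eqref{eq:f-asymp} to the asymptotic equivalence \eqref{eq:simple-asymp} and proving the two-sided positive bound on $\Re e^{i\rho\Arg z}F(\log_\lambda z)$: indeed \eqref{eq:Re>0} supplies only positivity of the real part, not divergence. Writing $\phi=\Arg z$ and $t=\log_\lambda|z|$, set
\[
h(t,\phi):=\Re\bigl(e^{i\rho\phi}F(t+i\phi/\log\lambda)\bigr),
\]
so that $\Re z^\rho F(\log_\lambda z)=|z|^\rho h(t,\phi)$. The function $h$ is continuous on $\R\times(-\pi,\pi)$, $1$-periodic in $t$, and strictly positive by \eqref{eq:Re>0}. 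On the compact rectangle $[0,1]\times[-\pi+\eps,\pi-\eps]$ it therefore attains a positive minimum $m_\eps$ and a finite maximum $M_\eps$, which by periodicity bound $h$ on all of $\R\times[-\pi+\eps,\pi-\eps]$; this is exactly the two-sided positive estimate asserted in the corollary. It then forces $\Re z^\rho F(\log_\lambda z)\geq m_\eps|z|^\rho\to\infty$ uniformly for $|\Arg z|\leq\pi-\eps$, so the correction series in \eqref{eq:f-asymp} becomes $O(\exp(-m_\eps|z|^\rho))$ (using convergence of the Laurent series for $g^{(-1)}$ once $|w|$ is large), yielding \eqref{eq:simple-asymp}. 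The core technical move is thus the compactness/periodicity packaging of the pointwise positivity \eqref{eq:Re>0} into a uniform positive lower bound.
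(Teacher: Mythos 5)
Your proof is correct, but it follows a genuinely different route from the paper. The paper's own proof is essentially three citations: Lemmas~6.4 and~6.5 of \cite{Derfel_Grabner_Vogl2007:asymptotics_poincare_functions} give that the zeros of $f$ are non-positive real, Theorem~\ref{thm:zeros-on-line} transfers this to $\J(p)\subset\R^-\cup\{0\}$, and the asymptotic statement is then quoted from Theorem~7.5 of \cite{Derfel_Grabner_Vogl2007:asymptotics_poincare_functions}. You instead argue directly on the dynamics: the inequality $p(x)\geq\lambda x$ on $\R^+$ (from non-negativity of the $p_i$, $i\geq2$) shows $(0,\infty)\subset\F_\infty(p)$, whence $\J(p)\subset\R^-\cup\{0\}$; the topological open-and-closed argument shows $\mathrm{int}\,\K(p)=\emptyset$, so $\K(p)=\J(p)$ and the slit plane lies in $\F_\infty(p)$; and then Theorem~\ref{thm:poincare-asymp} together with your compactness/periodicity packaging of the pointwise positivity \eqref{eq:Re>0} yields the uniform two-sided bounds and hence \eqref{eq:simple-asymp}. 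What your approach buys is self-containedness within this paper (no appeal to the external Lemmas~6.4/6.5 and Theorem~7.5) and an explicit proof of the final ``bounded between two positive constants'' assertion, which the paper delegates to the cited theorem; what the paper's route buys is brevity and the intermediate information about the zeros of $f$. Two small remarks: since you have shown $\K(p)=\J(p)\subset\R^-\cup\{0\}$, the whole slit plane $W_{-\pi,\pi}$ is an angular region contained in $\F_\infty(p)$, so you may apply Theorem~\ref{thm:poincare-asymp} once with $\alpha=-\pi$, $\beta=\pi$ and avoid the patching of the functions $F_\eps$ (whose identification you assert but do not verify); and the correction series in \eqref{eq:f-asymp} contains the constant term $c_0$, so it is $O(1)$ rather than $O(\exp(-m_\eps|z|^\rho))$ --- still $o\bigl(\exp(z^\rho F(\log_\lambda z))\bigr)$, so your conclusion is unaffected.
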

\begin{proof}
  From
  \cite[Lemmas~6.4~and~6.5]{Derfel_Grabner_Vogl2007:asymptotics_poincare_functions}
  it follows that $f(z)$ has only non-positive real zeros. Then by
  Theorem~\ref{thm:zeros-on-line} $\J(p)\subset\R^-\cup\{0\}$. Finally, the
  assertion follows by applying
  \cite[Theorem~7.5]{Derfel_Grabner_Vogl2007:asymptotics_poincare_functions}.
\end{proof}
\begin{example}
In order to illustrate the above results, we shall turn to the equation
\begin{equation*}
f(5z)=4f(z)^2-3f(z),
\end{equation*}
which arises in the description of Brownian motion on the Sierpi\'nski gasket
\cite{Derfel_Grabner_Vogl2008:zeta_function_laplacian,
  Kroen2002:green_functions_self,
  Kroen_Teufl2004:asymptotics_transition_probabilities,
  Teplyaev2004:spectral_zeta_function}. Here $p(z)=4z^2-3z$, and the fixed
point of interest is $f(0)=1$. This fits into the assumptions of
Section~\ref{sec:assumptions} only after substituting $g(z)=4(f(z)-1)$, where
$g$ satisfies
\begin{equation*}
g(5z)=g(z)^2+5g(z).
\end{equation*}
Now Corollary~\ref{cor15} may be applied to this equation (the preimages of $0$
are real by
\cite[Lemma~6.7]{Derfel_Grabner_Vogl2007:asymptotics_poincare_functions}) to
give \eqref{eq:simple-asymp}.

Note also that $p'(0)=5>4=2^2$ in accordance with Theorem~\ref{thm1}.
\end{example}
\subsection{The Julia set has positive and negative
  elements}\label{sec:julia-set-has}
Again as a consequence of Theorem~\ref{thm1} the solution of the Poincar\'e
equation for a polynomial with real Julia set with positive and negative
elements has order $\leq1$. The only solution of a Poincar\'e equation of order
$1$ in this situation are the functions
\begin{equation*}
f(z)=\frac1a\left(\cos\left(a\frac{z-\frac{2k\pi}{d-1}}
{\sin\frac{k\pi}{d-1}}\right)-\xi_k\right)
\end{equation*}
for
\begin{equation*}
  p(z)=\frac1a\left(T_d(a(z+\xi_k))-\xi_k\right),
\end{equation*}
where $a\in\R^+$ and $\xi_k=\cos\frac{k\pi}{d-1}$ for $1\leq k<\frac{d-1}2$.
This is again the only case where the periodic function $F$ in
\eqref{eq:f-asymp} is constant in this situation.

\section{The Zeta function of the Poincar\'e function}
\label{sec:zeta-funct-poinc}
In \cite{Derfel_Grabner_Vogl2008:zeta_function_laplacian} the zeta function of
a fractal Laplace operator was related to the zeta function of certain
Poincar\'e functions. Asymptotic expansions for the Poincar\'e functions were
then used to give a meromorphic continuation of these zeta functions as well as
information on the location of their poles and values of residues. In this
section we give a generalisation of these results to polynomials whose Fatou
set contains an angular region $W_{-\alpha,\alpha}$ around the positive real
axis. In this case the solution $f$ of \eqref{eq:poincare} has no zeros in an
angular region $W_{-\alpha,\alpha}$. Furthermore, from the Hadamard
factorisation theorem we get
\begin{equation}\label{eq:hadamard}
f(z)=z\exp\left(\sum_{\ell=1}^k(-1)^{\ell-1}\frac{e_\ell z^\ell}\ell\right)
\prod_{\substack{f(-\xi)=0\\\xi\neq0}}\!\!\left(1+\frac z\xi\right)
\exp\left(-\frac z\xi+\frac {z^2}{2\xi^2}+\cdots+
(-1)^{k-1}\frac{z^k}{k\xi^k}\right),
\end{equation}
where $k=\lfloor\log_\lambda d\rfloor$. By the discussion in 
\cite[Section~5]{Derfel_Grabner_Vogl2008:zeta_function_laplacian} the values
$e_1,\ldots,e_k$ are given by the first $k$ terms of the Taylor series of
$\log\frac{f(z)}z$
\begin{equation*}
\log\frac{f(z)}z=\sum_{\ell=1}^k(-1)^{\ell-1}\frac{e_\ell z^\ell}\ell
+\BigOh(z^{k+1}).
\end{equation*}

The zeta function of $f$ is now defined as
\begin{equation}\label{eq:zeta_f}
\zeta_f(s)=\sum_{\substack{f(-\xi)=0\\\xi\neq0}}\xi^{-s},
\end{equation}
where $\xi^{-s}$ is defined using the principal value of the logarithm, which
is sensible, since $\xi$ is never negative real by our assumption on
$\F_\infty(p)$. The function $\zeta_f(s)$ is holomorphic in the half plane $\Re
s>\rho$. In \cite{Derfel_Grabner_Vogl2008:zeta_function_laplacian} we used the
equation
\begin{equation}\label{eq:zeta-weierstrass}
\int_0^\infty\left(\log f(x)-\log x-
\sum_{\ell=1}^k(-1)^{\ell-1}\frac{e_\ell x^\ell}\ell\right)
x^{-s-1}\,dx=\zeta_f(s)\frac\pi{s\sin\pi s},
\end{equation}
which holds for $\rho<\Re s<k+1$, to derive the existence of a
meromorphic continuation of $\zeta_f$ to the whole complex plane. There
(\cite[Theorem~8]{Derfel_Grabner_Vogl2008:zeta_function_laplacian}) we obtained
\begin{equation*}
\Res_{s=\rho+\frac{2k\pi i}{\log\lambda}}\zeta_f(s)=
-\frac{f_k}\pi\left(\rho+\frac{2\pi ik}{\log\lambda}\right)
\sin\pi\left(\rho+\frac{2\pi ik}{\log\lambda}\right),
\end{equation*}
where $f_k$ is given by \eqref{eq:fk}. From this we get
\begin{equation}\label{eq:Res-zeta_f}
\Res_{s=\rho+\frac{2k\pi i}{\log\lambda}}\zeta_f(s)=
-\Res_{s=-\rho-\frac{2k\pi i}{\log\lambda}}M_\mu(s).
\end{equation}
This shows that the function
\begin{equation}\label{eq:zeta_f-M_mu}
\zeta_f(s)-M_\mu(-s)
\end{equation}
is holomorphic in $\rho-1<\Re s<\rho+1$, since the single poles on the line
$\Re s=\rho$ cancel. This fact was used in
\cite{Grabner1997:functional_iterations_stopping} to derive an analytic
continuation for $\zeta_f(s)$.

\begin{theorem}\label{thm9}
  Let $f$ be the entire solution of \eqref{eq:poincare} and assume that $p$ is
  neither linearly conjugate to a Chebyshev polynomial nor to a monomial and
  that $W_{-\alpha,\alpha}\subset\F_\infty(p)$ for some $\alpha>0$. Then the
  following assertions hold
  \begin{enumerate}
  \item\label{enum2} the limit $\lim_{t\to\infty}t^{-\rho}\log f(t)$ does not
    exist.
  \item\label{enum1} $\zeta_f(s)$ has at least two non-real poles in the
    set $\rho+2\pi i\sigma\Z$
    \textup{(}$\sigma=\frac1{\log\lambda}$\textup{)}.
  \item\label{enum5} the limit $\lim_{x\to0}x^{-\rho}G(x)$ with $G$ given by
    \eqref{eq:G(w)} does not exist.
  \end{enumerate}
\end{theorem}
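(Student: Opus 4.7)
The plan is to observe that all three assertions are equivalent formulations of the non-constancy of the periodic function $F$ appearing in the asymptotic expansion \eqref{eq:f-asymp}. Since by hypothesis $p$ is linearly conjugate neither to $z^d$ nor to $T_d$, Theorem~\ref{thm:constant} already tells us $F$ is non-constant; the proof then consists of translating this single fact into each of the three desired statements.

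For assertion~\ref{enum2}, I would specialise Theorem~\ref{thm:poincare-asymp} to real positive $t$. Because $\Re\bigl(z^\rho F(\log_\lambda z)\bigr)>0$ on $W_{-\alpha,\alpha}$, the first summand $\exp\bigl(t^\rho F(\log_\lambda t)\bigr)$ diverges while the correction series $\sum_n c_n\exp\bigl(-n t^\rho F(\log_\lambda t)\bigr)$ stays bounded, hence taking logarithms yields
\begin{equation*}
t^{-\rho}\log f(t)=F(\log_\lambda t)+o(1),\quad t\to\infty.
\end{equation*}
If this limit existed, the $1$-periodic $F$ would have a limit as its real argument tends to $+\infty$, forcing $F$ to be constant and contradicting Theorem~\ref{thm:constant}. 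Assertion~\ref{enum5} follows by the same periodicity obstruction applied to \eqref{eq:logg}, which gives $x^{-\rho}G(x)=F(\log_\lambda x)+\BigOh(x)$ as $x\to 0^+$.

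The more delicate step is assertion~\ref{enum1}. I would combine \eqref{eq:Res-zeta_f} with \eqref{eq:fk} to identify $\Res_{s=\rho+2k\pi i\sigma}\zeta_f(s)$ as a nonzero scalar multiple of the Fourier coefficient $f_k$ of $F$; the prefactor $(\log d+2k\pi i)\sin\pi(\rho+2k\pi i\sigma)$ is easily checked to be nonzero for every $k\in\Z$ (since $d\geq 2$ and $\rho+2k\pi i\sigma\notin\Z$ whenever $k\neq 0$). Non-constancy of $F$ then furnishes some $k_0\neq 0$ with $f_{k_0}\neq 0$. The reality of the coefficients of $p$ forces $f$ to be real-valued on $\R^+$, hence $F$ real on $\R$, hence $f_{-k_0}=\overline{f_{k_0}}\neq 0$ as well, so $\rho\pm 2k_0\pi i\sigma$ are two distinct non-real poles of $\zeta_f$.

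The principal obstacle lies in the bookkeeping for assertion~\ref{enum1}: one must ensure that no spurious cancellation arises along the chain of identifications linking the Fourier coefficients of $F$, the residues of the Mellin transform $M_\mu$, and the residues of $\zeta_f$, and that the reality of $F$ on $\R$ -- which underlies the complex-conjugate symmetry $f_{-k}=\overline{f_k}$ -- indeed holds under the geometric hypothesis $W_{-\alpha,\alpha}\subset\F_\infty(p)$. Assertions~\ref{enum2} and~\ref{enum5} are essentially routine once one notes that the exact expansion \eqref{eq:f-asymp} permits taking logarithms term by term on the positive real axis.
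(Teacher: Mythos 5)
Your proposal is correct and follows essentially the same route as the paper: non-constancy of $F$ from Theorem~\ref{thm:constant}, the asymptotic expansion \eqref{eq:f-asymp} for assertion~\ref{enum2}, the relation \eqref{eq:logg} for assertion~\ref{enum5}, and the chain linking Fourier coefficients of $F$ to residues of $\zeta_f$ (the paper phrases this via \eqref{eq:zeta-weierstrass} and the Mellin correspondence, you via \eqref{eq:Res-zeta_f} and \eqref{eq:fk}, which is the same computation) for assertion~\ref{enum1}. Your explicit conjugate-symmetry argument $f_{-k}=\overline{f_k}$, coming from the reality of $f$ on $\R^+$, fills in a detail the paper leaves implicit when it asserts that both $f_{\pm k_0}$ are nonzero.
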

\begin{proof}
  Equation \eqref{eq:f-asymp} in Theorem \ref{thm:poincare-asymp} (see also
  \cite{Derfel_Grabner_Vogl2007:asymptotics_poincare_functions}) implies that
  \begin{equation*}
  z^{-\rho}\log f(z)=F(\log_\lambda z)+o(1)\text{ for }z\to\infty\text{ and }
  z\in W_{-\alpha,\alpha}
\end{equation*}
with a periodic function $F$ of period $1$.
  Theorem~\ref{thm:constant} implies that $F$ is a non-constant . Thus the
  limit in \ref{enum2} does not exist.

Since the periodic function $F$ is non-constant, there exists a $k_0\neq0$ such
that the Fourier-coefficients $f_{\pm k_0}$ do not vanish. By
\eqref{eq:f-asymp} we have
\begin{equation*}
\log f(z)=z^\rho\sum_{k\in\Z}f_k z^{\frac{2k\pi i}{\log\lambda}}+
\BigOh(z^{-M})
\end{equation*}
for any $M>0$. By properties of the Mellin transform
(cf.~\cite{Paris_Kaminski2001:asymptotics_mellin_barnes}), every term
$Az^{\rho+i\tau}$ in the asymptotic expansion of $\log f(z)$ corresponds to a
first order pole of the Mellin transform of $\log f(z)$ with residue $A$ at
$s=\rho+i\tau$. Since $f_{k_0}\neq0$, from \eqref{eq:zeta-weierstrass} we have
simple poles of $\zeta_f(s)$ at $s=\rho\pm\frac{2k_0\pi i}{\log\lambda}$.

Assertion \ref{enum5} follows from \ref{enum2} by \eqref{eq:logg}.
\end{proof}

In the following we consider the zero counting function of $f$
\begin{equation}\label{eq:N(x)}
N_f(x)=\sum_{\substack{|\xi|<x\\f(\xi)=0}}1.
\end{equation}

\begin{theorem}\label{thm11}
  Let $f$ be the entire solution of \eqref{eq:poincare}. Then the following are
  equivalent 
  \begin{enumerate}
  \item\label{enum3} the limit $\lim_{x\to\infty}x^{-\rho}N_f(x)$ does
    not exist.
  \item\label{enum4} the limit $\lim_{t\to0}t^{-\rho}\mu(B(0,t))$ does not
    exist.
  \end{enumerate}
\end{theorem}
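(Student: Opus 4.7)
The plan is to introduce the auxiliary function $\Phi(r):=r^{-\rho}\mu(f(B(0,r)))$ for small $r>0$ and to show that both $\lim_{x\to\infty}x^{-\rho}N_f(x)$ and $\lim_{t\to 0}t^{-\rho}\mu(B(0,t))$ exist if and only if $\lim_{r\to 0}\Phi(r)$ exists. The key observation is an equidistribution bridge between zeros of $f$ and preimages of $0$ under iterates of $p$.

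The first step bridges $N_f$ to $\Phi$. Iterating \eqref{eq:poincare} gives $f(\lambda^n z)=p^{(n)}(f(z))$, so for $r$ small enough that $f$ is biholomorphic on $B(0,r)$ (possible since $f'(0)=1$), the map $z\mapsto f(z/\lambda^n)$ is a multiplicity-preserving bijection
\begin{equation*}
\{z\in B(0,\lambda^n r):f(z)=0\}\;\longleftrightarrow\;A_n\cap f(B(0,r)),\qquad A_n:=(p^{(n)})^{-1}(0),
\end{equation*}
so $N_f(\lambda^n r)=|A_n\cap f(B(0,r))|$ (counted with multiplicity). The Brolin measures $\mu_n$ from \eqref{eq:mu_n} with $\xi=0$ (non-exceptional since $0$ is a repelling fixed point) converge weakly to $\mu$, hence $d^{-n}|A_n\cap U|\to\mu(U)$ for every open $U$ with $\mu(\partial U)=0$. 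Since the boundaries $\partial f(B(0,r))$ are pairwise disjoint for distinct small $r$, this condition fails on at most countably many $r$, and for every such ``good'' $r$, using $d^n=\lambda^{n\rho}$,
\begin{equation*}
(\lambda^n r)^{-\rho}N_f(\lambda^n r)\;\longrightarrow\;\Phi(r)\quad\text{as }n\to\infty.
\end{equation*}

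The second step compares $\Phi(r)$ with $\Psi(t):=t^{-\rho}\mu(B(0,t))$ as their arguments tend to $0$. Because $f'(0)=1$, for every $\eps>0$ and $r$ sufficiently small we have $B(0,r(1-\eps))\subset f(B(0,r))\subset B(0,r(1+\eps))$, which yields $(1-\eps)^\rho\Psi(r(1-\eps))\le\Phi(r)\le(1+\eps)^\rho\Psi(r(1+\eps))$. Taking $\liminf$ and $\limsup$ as $r\to 0$ and then $\eps\to 0$ gives $\liminf_{r\to 0}\Phi=\liminf_{t\to 0}\Psi$ and $\limsup_{r\to 0}\Phi=\limsup_{t\to 0}\Psi$, so the existence of either limit is equivalent to that of the other.

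The third and most delicate step transfers the limit in $r\to 0$ to the limit in $x\to\infty$ via monotonicity of $N_f$, writing $\tilde N(x):=x^{-\rho}N_f(x)$. If $L:=\lim_{r\to 0}\Phi(r)$ exists, given $\delta>0$ fix a small fundamental domain $[r_0,\lambda r_0]$ on which $|\Phi-L|<\delta$, partitioned by good points $r_0=s_0<s_1<\cdots<s_K=\lambda r_0$ with consecutive ratios at most $1+\delta$. Any large $x$ satisfies $\lambda^n s_i\le x<\lambda^n s_{i+1}$ for some $n,i$, and the monotonicity of $N_f$ combined with the $x^{-\rho}$ factor gives
\begin{equation*}
(1+\delta)^{-\rho}\,\tilde N(\lambda^n s_i)\;\le\;\tilde N(x)\;\le\;(1+\delta)^{\rho}\,\tilde N(\lambda^n s_{i+1}).
\end{equation*}
Sending $n\to\infty$ at each $s_i$ using the first step, then $\delta,r_0\to 0$, pinches $\tilde N(x)$ to $L$. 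Conversely, if $\tilde N(x)\to L'$ then choosing $x=\lambda^n r$ for any fixed good small $r$ forces $\Phi(r)=L'$; monotonicity of $r\mapsto\mu(f(B(0,r)))$ together with the density of good $r$ propagates this to $\Phi(r)=L'$ for all small $r$, giving $\lim_{r\to 0}\Phi=L'$. The main obstacle is precisely this step, where only lattice convergence (along $\lambda^n r$) is available from the first step and one must absorb the $\lambda^{\pm\rho}$ losses by refining the partition of the fundamental domain; the measure-theoretic bookkeeping around countable bad $r$ is mild.
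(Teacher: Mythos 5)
Your proof is correct and follows essentially the same route as the paper's: identify $N_f(\lambda^n r)$ with $\#\bigl(p^{(-n)}(0)\cap f(B(0,r))\bigr)$ via the functional equation and the local invertibility of $f$ at $0$, invoke Brolin's weak convergence $\mu_n\to\mu$, and use the $f'(0)=1$ sandwich $B(0,(1-\eps)t)\subset f(B(0,t))\subset B(0,(1+\eps)t)$. The additional bookkeeping you supply (radii with $\mu(\partial f(B(0,r)))=0$ for the Portmanteau step, and monotonicity of $N_f$ to pass from the lattice points $\lambda^n r$ to all $x$ and back) just makes explicit what the paper treats implicitly.
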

\begin{proof}
  For the proof of the equivalence of \ref{enum3} and \ref{enum4} we
  observe that by the fact that $f'(0)=1$, there is an $r_0>0$ such that
  $f:B(0,r_0)\to\C$ is invertible.  For the following we choose
  $n=\lfloor\log_\lambda(x/r_0)\rfloor+k$ and let the integer $k>0$ be fixed
  for the moment. Then we use the functional equation for $f$ to get
\begin{equation*}
N_f(x)=\#\left\{\xi\mid f(\lambda^n\xi)=p^{(n)}(f(\xi))=0
\wedge|\xi|<x\lambda^{-n}\right\}
\!=\!\#\!\left(p^{(-n)}(0)\cap f(B(0,x\lambda^{-n}))\right).
\end{equation*}
This last expression can now be written in terms of the discrete measure
$\mu_n$ given in \eqref{eq:mu_n}
\begin{equation*}
N_f(x)=d^n\mu_n\left(f(B(0,x\lambda^{-n}))\right).
\end{equation*}
By the weak convergence of the measures $\mu_n$
(cf.~\cite{Brolin1965:invariant_sets_under}) we get for $x\to\infty$
(equivalently $n\to\infty$)
\begin{equation*}
N_f(x)=d^n\mu(f(B(0,x\lambda^{-n})))+o(d^n)=
x^\rho(x\lambda^{-n})^{-\rho}\mu(f(B(0,x\lambda^{-n})))+o(x^\rho).
\end{equation*}
By our choice of $n$ we have $r_0\lambda^{-k-1}\leq x\lambda^{-n}\leq
r_0\lambda^{-k-1}$, which makes the first term dominant. From this it is clear
that the existence of the limit
\begin{equation*}
\lim_{x\to\infty}x^{-\rho}N_f(x)=C
\end{equation*}
is equivalent to
\begin{equation*}
\mu(f(B(0,t)))=Ct^\rho\text{ for }r_0\lambda^{-k}\leq t< r_0\lambda^{-(k-1)}.
\end{equation*}
Since $k$ was arbitrary this implies
\begin{equation}\label{eq:mu(B)}
\mu(f(B(0,t)))=Ct^\rho\text{ for }0< t< r_0.
\end{equation}

It follows from $f'(0)=1$ that
\begin{equation}\label{eq:mu(f(B))-mu(B))}
\forall\eps>0:\exists\delta>0:\forall t<\delta:
B(0,(1-\eps)t)\subset f(B(0,t))\subset B(0,(1+\eps)t).
\end{equation}
Thus the existence of the limit in assertion \ref{enum4} is equivalent to
\begin{equation*}
\lim_{t\to0}t^{-\rho}\mu(f(B(0,t)))=C.
\end{equation*}
Thus \ref{enum3} and \ref{enum4} are equivalent.
\end{proof}

\begin{remark}
If $\J(p)$ is real and disconnected then the limits in Theorem~\ref{thm11} do
not exist. Furthermore, it is known that the limit
\begin{equation*}
\lim_{t\to0}t^{-\rho}\mu(f(B(w,t)))=C
\end{equation*}
does not exist for $\mu$-almost all $w\in\J(p)$
(cf.~\cite[Theorem~14.10]{Mattila1995:geometry_sets_measures}), if $\rho$ is
not an integer.
\end{remark}
This motivates the following conjecture.
\begin{conj*}
The limits in Theorem~\ref{thm11} exist, if and only if $p$ is either linearly
conjugate to a Chebyshev polynomial or a monomial.
\end{conj*}

\begin{acknowledgements}This research was initiated during the second author's
  visit to the Ben Gurion University of the Negev with support by the Center of
  Advanced Studies in Mathematics.\\
  It was completed during the second author's visit at the Center for
  Constructive Approximation and the Department of Mathematics at Vanderbilt
  University, Nashville, Tennessee. He is especially thankful to
  Edward~B.~Saff for the invitation and the great hospitality.\\
  The first author wants to thank Alexandre~Er\"emenko, Genadi~Levin, and
  Mikhail~Sodin for interesting discussions.\\
  The authors are indebted to an anonymous referee for valuable remarks.
\end{acknowledgements}

\end{document}